\newtheorem{Lem}{Lemma}[section]
\newtheorem{Prop}[Lem]{Proposition}
\newtheorem{Cor}[Lem]{Corollary}
\newtheorem{Thm}[Lem]{Theorem}
\newtheorem{Discus}[Lem]{Discussion}
\newtheorem{Def}[Lem]{Definition}
\newtheorem{Rem}[Lem]{Remark}
\newtheorem{Expl}[Lem]{Example}
\newtheorem{Const}[Lem]{Construction}
\newenvironment{proof}[1][Proof]{\textrm{\em #1.} }{\hfill$\Box$\medskip\medskip}
\newcommand\Gap{\operatorname{Gap}}
\newcommand\gap{\operatorname{-gap}}
\newcommand\wdt{{\operatorname{wd}}}
\newcommand\Shad{{\operatorname{Shad}}}
\newcommand\BShad{{\operatorname{BShad}}}
\newcommand\supp{{\operatorname{supp}}}
\newcommand\slex{{\operatorname{slex}}}
\newcommand\C{\mathcal{C}}
\def\NZQ{\mathbb}
\def\NN{{\NZQ N}}
\def\ZZ{{\NZQ Z}}
\newcommand\Corn{{\operatorname{Corn}}}
\let\emptyset\varnothing
\newcommand\blfootnote[1]{%
  \begingroup
  \renewcommand\thefootnote{}\footnote{#1}%
  \addtocounter{footnote}{-1}%
  \endgroup
}
\begin{document}
\title{\bf\normalsize\MakeUppercase{A numerical characterization of the extremal Betti numbers of} $t$--\MakeUppercase{spread strongly stable Ideals}}
\author{Luca Amata, Antonino Ficarra, Marilena Crupi}	

\newcommand{\Addresses}{{
  \footnotesize
  \textsc{Department of Mathematics and Computer Sciences, Physics and Earth Sciences, University of Messina, Viale Ferdinando Stagno d'Alcontres 31, 98166 Messina, Italy}
\begin{center}
 \textit{E-mail addresses}: \texttt{lamata@unime.it}; \texttt{antficarra@unime.it}; \texttt{mcrupi@unime.it}
\end{center}

}}
\date{}
\maketitle
\Addresses

\begin{abstract}
Let $K$ be a field and let $S=K[x_1,\dots,x_n]$ be a standard polynomial ring over a field $K$. We characterize the extremal Betti numbers, values as well positions, of a $t$--spread strongly stable ideal of $S$. 
Our approach is constructive.
Indeed, given some positive integers $a_1,\dots,a_r$ and some pairs of positive integers $(k_1,\ell_1),\dots,(k_r,\ell_r)$, we are able to determine under which conditions there exist a $t$--spread strongly stable ideal $I$ of $S$ with $\beta_{k_i, k_i\ell_i}(I)=a_i$, $i=1, \ldots, r$, as extremal Betti numbers, and then to construct it.


\blfootnote{
\hspace{-0,3cm} \emph{Keywords:} monomial ideals, minimal graded resolution, extremal Betti numbers, $t$-spread ideals.

\emph{2020 Mathematics Subject Classification:} 05E40, 13B25, 13D02, 16W50, 68W30.

}
\end{abstract}

\section{Introduction}\label{sec1}
Let $K$ be a field and let $S=K[x_1,\dots,x_n]$ be a polynomial ring. A squarefree monomial ideal of $S$, known also as Stanley--Reisner ideal, is a monomial ideal generated by squarefree monomials. The intimate relationship of this algebraic object with simplicial topology determines its combinatorial nature. The importance of such a class of monomial ideals  is due to Richard Stanley. In fact, in 1975,
Stanley used the theory of Cohen--Macaulay rings to solve the upper bound conjecture for spheres  \cite{RPS}. More in details, he used tools  from commutative algebra to study simplicial complexes by considering the Hilbert function of Stanley--Reisner
rings, whose defining ideals are squarefree ideals.

Recently, a generalization of the notion of squarefree ideal has been given  in \cite{EHQ} by the definition of $t$--spread monomial ideal.
Let $t\ge 0$ an integer and let $u=x_{i_1}x_{i_2}\cdots x_{i_d}$ be a monomial of $S$, with $1\le i_1\le i_2\le \dots \le i_d\le n$. $u$ is called $t$--spread if $i_{j+1}-i_j\ge t$, for all $j=1,\dots,d-1$. A monomial ideal $I$ is a $t$--spread monomial ideal if it is generated by $t$--spread monomials. Clearly, when $t=1$, a $1$--spread monomial ideal is a squarefree monomial ideal. Many results which are true for squarefree ideals can be suitably rewritten 
for the class of $t$--spread ideals. For example, the classical Kruskal--Katona theorem that characterizes the $f$--vectors of simplicial complexes has been generalized by C. Andrei--Ciobanu for the class of $t$--spread strongly stable ideals \cite{CAC}.

Among the algebraic invariants of a graded ideal $I$ of $S$, the graded Betti numbers $\beta_{i,j}(I)$ are probably the most important: the integer $\beta_{i,j}(I)$ gives the dimension, as a $K$--vector space, of the $j$--th component of the $i$--th free module in the minimal graded free resolution of $I$.  
They
are usually displayed in a table called the \textit{Betti table} of $I$: 
\begin{table}[H]
\begin{center}
\scalebox{0.85}{
\begin{tikzpicture}
\draw (0,4.5)--node[above]{$k$}(10,4.5);
\draw (1,0)--node[above left]{$\ell$}(1,5);
\draw [loosely dotted] (5,1.5)--(5,4.5);
\draw [loosely dotted] (1,3)--node[below right]{$\beta_{k,k + \ell}$}(8,3);
\draw [dashed] (1,0.8)--(4,0.8)--(4,1.5)--(7,1.5)--(7,2)--(8,2)--(8,4.5);
\draw [fill] (4,0.8) circle [radius=0.03] node [below]{$(k_3,\ell_3)$};
\draw [fill] (7,1.5) circle [radius=0.03] node [below]{$(k_2,\ell_2)$};
\draw [fill] (8,2) circle [radius=0.03] node [below right]{$(k_1,\ell_1)$};
\end{tikzpicture}
}
\end{center}
\caption{Betti table}\label{fig:carnumfig1}
\end{table}
\medskip
The columns are indexed from left to right by homological degree starting with homological degree
zero. The rows are indexed increasingly from top to bottom starting with the minimal degree of an element in a minimal system of homogeneous generators of $I$. The graded Betti numbers which appear in the outside corners of the dashed line are called extremal Betti numbers. This definition 
has been introduced by Bayer, Charalambous and Popescu in \cite{BCP}.

%
%

In this paper, our attention is devoted to $t$--spread strongly stable ideals (Definition \ref{def:strastbid}).  There is a combinatorial formula \cite[Corollary 1.12]{EHQ} that allows us to easily compute the graded Betti numbers of such a class of ideals. Moreover, thanks to this formula, one can give a characterization of these invariants (Theorem \ref{betti teor}) which is useful for the studying of their behavior. Such a result will be a pivotal tool throughout the paper.

Our goal is to solve the following question: \\
$(*)$ \emph{Given $n,r,t$ positive integers, $r$ positive integers $a_1,\dots,a_r$ and $r$ positive pairs of integers $(k_1,\ell_1),\dots,(k_r,\ell_r)$, under which conditions there exist a $t$--spread strongly stable ideal $I$ of $S=K[x_1,\dots,x_n]$ such that
$$
\beta_{k_1,k_1+\ell_1}(I)=a_1,\dots,\beta_{k_r,k_r+\ell_r}(I)=a_r
$$ 
are its extremal Betti numbers?}\medskip

For $t=1$, a positive answer can be found in \cite{AC1}; whereas in \cite{AFC} the maximal number $r$ of extremal Betti numbers allowed for a $t$--spread strongly stable ideals, for $t\ge 2$, has been computed. 
In this paper, we are able to give a positive answer to question (*) for $t\ge 2$.

Our approach is mainly constructive. We give a numerical characterization of the extremal Betti numbers of a $t$--spread strongly stable ideal and if the ideal that solves the problem does exist, then we provide the combinatorial tools to determine it.

The outline of the paper is the following. Section \ref{sec2}, contains preliminary
notions and results. In Section \ref{sec3}, we introduce the notion of $j\gap_t$ of a $t$--spread monomial  (Definition \ref{def:tgap}), $j$ is a positive integer, and the notion of Borel $t$--shadow of a set of $t$--spread monomials (Definition \ref{def:boshad}). These definitions will have a key role throughout the paper. 
Section \ref{sec4} contains a numerical characterization (positions and values) of the extremal Betti numbers of a $t$--strongly stable ideal (Theorem \ref{carnummainteor}). Finally,  Section \ref{sec:expl} contains an example which illustrates our methods.

All the examples have been verified using specific packages of \textit{Macaulay2} some of which developed by the authors of the paper. 

\section{Preliminaries}\label{sec2}

Let $S=K[x_1,\dots,x_n]$ be the standard polynomial ring in $n$ variables with coefficients in a field $K$. $S$ is an $\NN$--graded ring where $\deg x_i=1$, for all $i=1,\dots,n$. A monomial ideal $I$ of $S$ is an ideal generated by monomials. If $I$ is a monomial ideal, we denote by $G(I)$ the unique minimal set of monomial generators of $I$ ad we set
\[G(I)_{\ell}= \{u\in G(I)\,:\, \deg u=\ell\}.\]

For a monomial $u\in S$, $u\ne 1$, we denote by $\supp(u)$ the set of all index $i$ for which $x_i$ divides $u$ and by $\max(u)$ ($\min(u)$) the maximal (minimal) index $i$ for which $x_i$ divides $u$.

If $u=1$, for convenience, we set $\max(1)=\min(1)=0$.


The next definitions have been introduced in \cite{EHQ}.

\begin{Def}\rm
If $t\ge0$ is an integer, a monomial $x_{i_1}x_{i_2}\cdots x_{i_\ell}$ of $S$ with $1\le i_1\le i_2\le\ldots\le i_\ell\le n$ is called $t$--spread, if $i_{j+1}-i_j\ge t$, for all $j=1,\dots,\ell-1$.  A $t$--spread monomial ideal is a monomial ideal generated by $t$--spread monomials.
\end{Def}

For example, every monomial ideal of the polynomial ring $S$ is $0$--spread and every squarefree monomial ideal is a $1$--spread monomial ideal. Furthermore, one can observe that if $t\ge 1$, then every $t$--spread monomial is a squarefree monomial.\\

A special class of $t$--spread monomial ideals consists of the $t$--spread strongly stable ideals.

%
\begin{Def}\label{def:strastbid}\rm
A $t$--spread strongly stable ideal $I$ of $S$ is a $t$--spread monomial ideal such that for all $t$--spread monomials $u \in I$ and for all $i$, $j$ such that $1\le i < j\le n$, $x_j$ divides $u$ and $x_i(u/x_j)$ is $t$--spread, it follows that $x_i(u/x_j) \in I$.
\end{Def}

\begin{Rem}\rm
One can observe that the defining property of a $t$--strongly stable ideal needs to be checked only for the set of monomial generators of a $t$--spread monomial ideal \cite[Lemma 1.2]{EHQ}. In fact, let $I$ be a $t$--spread monomial ideal and suppose that for all $u\in G(I)$, for all integers $i<j$ with $j\in\supp(u)$ and such that $x_i(u/x_j)$ is a $t$--spread monomial, one has $x_i(u/x_j)\in I$. Then $I$ is a $t$--spread strongly stable ideal.
\end{Rem}

It is clear that the notion of $t$--spread strongly stable ideal generalizes the concept of strongly stable and squarefree strongly stable ideal.\\

There is an equivalent definition for a $t$--spread strongly stable ideal \cite[Definition 1.1]{CAC}. 
Following the same notations as in \cite{CAC}, we will denote by $[I_j]_t$ the set of all $t$--spread monomials of degree $j$ of an arbitrary monomial ideal $I$.

Let us denote by $M_{n,d,t}$ the set of all $t$--spread monomials of degree $d$ of the ring $S$.

\begin{Def}\rm
A subset $L$ of $M_{n,d,t}$ is called a $t$--spread strongly stable set, if for all $t$--spread monomials $u\in L$, all $j\in \supp(u)$ and all $1\le i< j$ such that $x_i(u/x_j)$ is a $t$--spread monomial, it follows that $x_i(u/x_j)\in L$.

A $t$--spread monomial ideal $I$ is a $t$--spread strongly stable ideal if $[I_j]_t$ is a $t$--spread strongly stable set for all $j$.
\end{Def}

Let $u_1,\dots,u_r$ be $t$--spread monomials of $S$. The unique $t$--spread strongly stable ideal containing $u_1,\dots,u_r$ will be denoted by $B_t(u_1,\dots,u_r)$, \cite{EHQ}. The monomials $u_1,\dots,u_r$ are called \textit{$t$--spread Borel generators}.

Furthermore, if $u_1,\dots,u_r$ are $t$--spread monomials of degree $d$,  the smallest $t$--spread strongly stable set of $M_{n,d, t}$ containing the monomials $u_1, \ldots, u_r$, will be denoted by $B_t\{ u_1, \ldots, u_r\}$.

Now, for a nonempty subset $L$ of $M_{n,d,t}$, we define the \textit{$t$--shadow} of $L$
$$
\Shad_t(L)= \big\{x_iw\,:\, w\in L\, \mbox{and $x_iw$ is $t$--spread monomial, $i=1,\dots,n$}\big\}.
$$

Throughout the paper, we assume that $t\ge 1$ and that  $M_{n,d,t}$  is endowed with the \emph{squarefree lexicographic order}, $>_{\slex}$ \cite{AHH2}, \emph{i.e.}, let $u=x_{i_1}x_{i_2}\cdots x_{i_d}$,  $v=x_{j_1}x_{j_2}\cdots x_{j_d}$,
be $t$--spread monomials of degree $d$, with $1\le i_1<i_2<\dots<i_d\le n$, and $1\le j_1<j_2<\dots<j_d\le n$, then
$u>_{\slex}v$ if $i_1=j_1,\dots,i_{s-1}=j_{s-1}$, and $i_s<j_s$,
for some $1\le s\le d$.

For a nonempty subset $T$  of $M_{n,d,t}$, we denote by $\max T$ ($\min T$) the maximal (minimum) monomial $w\in T$, with respect to $>_{\slex}$.\\




As for the class of (squarefree) strongly stable ideals of $S$, there exists an explicit formula due to Herzog, Ene and Qureshi  \cite[Corollary 1.12]{EHQ} for the graded Betti numbers of a $t$--spread strongly stable ideal given by
\begin{equation}\label{eqcarnum1}
\beta_{k,k+\ell}(I)=\sum_{u\in G(I)_\ell}\binom{\max(u)-t(\ell-1)-1}{k}.
\end{equation}

In order to apply such a formula, we just need to know the minimal set of monomial generators of $I$. 

\begin{Rem}\rm
If we put $t=0$ in (\ref{eqcarnum1}), then we obtain the well--known formula of Eliahou and Kervaire \cite{EK} for the (strongly) stable ideals; whereas, if we put $t=1$ in (\ref{eqcarnum1}), then we obtain the Aramova, Herzog and Hibi formula for squarefree (strongly) stable ideals \cite{AHH2}.
\end{Rem}

\begin{Def}\rm (\cite{BCP})
A graded Betti number $\beta_{k,k+\ell}(I)\ne 0$ is called \textit{extremal} if $\beta_{i,i+j}(I)=0$ for all $i\ge k,j\ge\ell,(i,j)\ne(k,\ell)$.
\end{Def}
	
If $\beta_{k,k+\ell}(I)$ is extremal, the pair $(k,\ell)$ is called a \textit{corner} of $I$.
If $(k_1,\ell_1),$ $\dots,$ $(k_r,\ell_r)$, with $n-1\ge k_1>k_2>\dots>k_r\ge1$ and $1\le \ell_1<\ell_2<\dots<\ell_r$, are all the corners of a graded ideal $I$ of $S$, the set
$$
\Corn(I)=\Big\{(k_1,\ell_1),(k_2,\ell_2),\dots,(k_r,\ell_r)\Big\}
$$
is called the \textit{corner sequence} of $I$ \cite{MC}. The $r$--tuple
$$
a(I)=\big(\beta_{k_1,k_1+\ell_1}(I),\beta_{k_2,k_2+\ell_2}(I),\dots,\beta_{k_r,k_r+\ell_r}(I)\big)
$$
is called the \textit{corner values sequence} of $I$ \cite{MC}.

As a consequence of formula (\ref{eqcarnum1}), in \cite{AC2} the following two results were stated. 
\begin{Thm} \label{betti teor} (\cite[Theorem 1]{AC2})
Let $I$ be a $t$--spread strongly stable ideal of $S$.
	
The following conditions are equivalent:
\begin{enumerate}
\item[\em(a)] $\beta_{k,k+\ell}(I)$ is extremal;
\item[\em(b)] $k+t(\ell-1)+1=\max\big\{\max(u):u\in G(I)_\ell\big\}$ and $\max(u)<k+t(j-1)+1$, for all $j>\ell$ and for all $u\in G(I)_j$.
\end{enumerate}
\end{Thm}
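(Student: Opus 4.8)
The plan is to derive both implications directly from the combinatorial Betti number formula \eqref{eqcarnum1}, which expresses $\beta_{k,k+\ell}(I)$ as a sum of binomial coefficients $\binom{\max(u)-t(\ell-1)-1}{k}$ over $u \in G(I)_\ell$. The crucial observation is that $\binom{m}{k} \ne 0$ if and only if $m \ge k$, so for a fixed $\ell$ the vanishing of $\beta_{k,k+\ell}(I)$ is controlled entirely by the quantity $M_\ell := \max\{\max(u) : u \in G(I)_\ell\}$ (with the convention that $M_\ell = -\infty$, or the sum is empty, when $G(I)_\ell = \emptyset$): one has $\beta_{k,k+\ell}(I) \ne 0$ precisely when $M_\ell - t(\ell-1) - 1 \ge k$, i.e. $k \le M_\ell - t(\ell-1) - 1$. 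I would state this equivalence first as the key reduction.

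For the implication (a) $\Rightarrow$ (b): assume $\beta_{k,k+\ell}(I)$ is extremal. Since it is nonzero, the reduction above gives $k \le M_\ell - t(\ell-1) - 1$, hence $k + t(\ell-1) + 1 \le M_\ell$. If this inequality were strict, then $k' := k+1$ would still satisfy $k' \le M_\ell - t(\ell-1)-1$, so $\beta_{k+1, (k+1)+\ell}(I) \ne 0$, contradicting extremality (take $i = k+1 \ge k$, $j = \ell$). Therefore $k + t(\ell-1) + 1 = M_\ell$, which is the first half of (b). For the second half, fix any $j > \ell$ and any $u \in G(I)_j$; I must show $\max(u) < k + t(j-1) + 1$. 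Suppose not, i.e. $\max(u) \ge k + t(j-1) + 1$. Then $\max(u) - t(j-1) - 1 \ge k$, so the term $\binom{\max(u) - t(j-1)-1}{k}$ in the formula for $\beta_{k,k+j}(I)$ is nonzero, and since all terms in \eqref{eqcarnum1} are nonnegative, $\beta_{k,k+j}(I) \ne 0$; taking $i = k \ge k$ and this $j > \ell$ again contradicts extremality. Hence (b) holds.

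For the implication (b) $\Rightarrow$ (a): assume both conditions in (b). From $k + t(\ell-1) + 1 = M_\ell$ we get $M_\ell - t(\ell-1) - 1 = k \ge k$, so the reduction gives $\beta_{k,k+\ell}(I) \ne 0$. It remains to check $\beta_{i,i+j}(I) = 0$ for every pair $(i,j)$ with $i \ge k$, $j \ge \ell$, $(i,j) \ne (k,\ell)$. Split into two cases. If $j = \ell$ and $i > k$: then $i > k = M_\ell - t(\ell-1) - 1$, so for every $u \in G(I)_\ell$ we have $\max(u) - t(\ell-1) - 1 \le M_\ell - t(\ell-1)-1 = k < i$, hence every binomial coefficient $\binom{\max(u)-t(\ell-1)-1}{i}$ vanishes and $\beta_{i,i+\ell}(I) = 0$. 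If $j > \ell$: the second part of (b) gives $\max(u) < k + t(j-1) + 1 \le i + t(j-1) + 1$ for all $u \in G(I)_j$, so $\max(u) - t(j-1) - 1 < i$ and again $\binom{\max(u) - t(j-1)-1}{i} = 0$ for every $u \in G(I)_j$, whence $\beta_{i,i+j}(I) = 0$. (The case $j < \ell$ does not arise since we require $j \ge \ell$.) This exhausts all cases, so $\beta_{k,k+\ell}(I)$ is extremal.

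The argument is essentially bookkeeping with the binomial-vanishing criterion; the only genuine subtlety is handling the degrees $j$ for which $G(I)_j = \emptyset$ — there the sum in \eqref{eqcarnum1} is empty and $\beta_{i,i+j}(I) = 0$ automatically, which I would note is consistent with the convention $M_j = -\infty$ and requires no separate treatment. I expect no serious obstacle beyond stating the quantity $M_\ell$ cleanly and being careful that "$\max$ over an empty set" is interpreted correctly.
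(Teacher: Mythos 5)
Your proof is correct and follows the same approach as the cited source: it works directly from the Ene--Herzog--Qureshi formula (\ref{eqcarnum1}), reducing everything to the elementary vanishing criterion $\binom{m}{k}\neq 0 \iff m\ge k$ (valid here since $\max(u)-t(\ell-1)-1\ge 0$ for every $t$--spread $u$ of degree $\ell$), and then handling the two implications by a clean case analysis on $i>k$, $j=\ell$ versus $j>\ell$. The treatment of empty generating sets $G(I)_j$ is also handled correctly.
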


\begin{Cor}\label{AC1 cor2} (\cite[Corollary 2]{AC2})
Let $I$ be a $t$--spread strongly stable ideal of $S$ and let $\beta_{k,k+\ell}(I)$ be an extremal Betti number of $I$. Then
$$
\beta_{k,k+\ell}(I)=\Big|\Big\{ u\in G(I)_\ell:\max(u)=k+t(\ell-1)+1 \Big\}\Big|.
$$
\end{Cor}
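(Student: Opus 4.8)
The plan is to read the statement straight off formula (\ref{eqcarnum1}) once the positions are pinned down by Theorem \ref{betti teor}. Since $\beta_{k,k+\ell}(I)$ is extremal, it is in particular nonzero, so $G(I)_\ell\neq\emptyset$ and the quantity $\max\{\max(u):u\in G(I)_\ell\}$ is well defined; by the implication $(a)\Rightarrow(b)$ of Theorem \ref{betti teor} it equals $k+t(\ell-1)+1$. Hence $\max(u)\le k+t(\ell-1)+1$, i.e. $\max(u)-t(\ell-1)-1\le k$, for every $u\in G(I)_\ell$.

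Next I would record that for a $t$--spread monomial $u=x_{i_1}\cdots x_{i_\ell}$ with $i_1<\cdots<i_\ell$ and $i_{s+1}-i_s\ge t$ one has $\max(u)=i_\ell\ge i_1+t(\ell-1)\ge 1+t(\ell-1)$, so the integer $\max(u)-t(\ell-1)-1$ is nonnegative. Combining this with the previous bound, $0\le\max(u)-t(\ell-1)-1\le k$ for each $u\in G(I)_\ell$, and therefore the binomial coefficient occurring in (\ref{eqcarnum1}) satisfies
$$
\binom{\max(u)-t(\ell-1)-1}{k}=\begin{cases}1,&\text{if }\max(u)=k+t(\ell-1)+1,\\[1mm]0,&\text{otherwise.}\end{cases}
$$

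Plugging this evaluation into (\ref{eqcarnum1}) collapses the sum to the number of degree--$\ell$ generators attaining the extremal position, which is exactly
$$
\beta_{k,k+\ell}(I)=\Big|\Big\{u\in G(I)_\ell:\max(u)=k+t(\ell-1)+1\Big\}\Big|,
$$
as claimed. There is no real obstacle here: the only point that must be handled with care is the direction of vanishing of the binomial coefficient — it vanishes when its upper argument is \emph{strictly smaller} than $k$, not when it is larger — which is precisely why the positional half of Theorem \ref{betti teor}, forcing $\max(u)\le k+t(\ell-1)+1$, is the crucial input; without it the terms with $\max(u)>k+t(\ell-1)+1$ would contribute and the clean count would fail.
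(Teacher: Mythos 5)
Your proof is correct, and it is essentially the natural (and, as far as one can tell from the reference to \cite[Corollary~2]{AC2}, the intended) argument: use Theorem~\ref{betti teor}(a)$\Rightarrow$(b) to pin the maximum of $\max(u)$ over $G(I)_\ell$ at $k+t(\ell-1)+1$, note that the $t$--spread condition gives the complementary lower bound $\max(u)-t(\ell-1)-1\ge 0$, and then collapse the sum in~(\ref{eqcarnum1}) using $\binom{m}{k}=0$ for $0\le m<k$ and $\binom{k}{k}=1$. You were right to flag the lower bound as the subtle point: without $\max(u)-t(\ell-1)-1\ge 0$ one could not invoke the vanishing convention for the binomial coefficient, and without the upper bound from Theorem~\ref{betti teor} terms with $\max(u)>k+t(\ell-1)+1$ would contribute. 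Nothing is missing.
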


We will see that Corollary \ref{AC1 cor2} will be fundamental in order to obtain the numerical characterization of the extremal Betti numbers of a $t$--spread strongly stable ideal.

\section{Combinatorics of $t$--spread monomials}\label{sec3}
	

In this Section, we generalize some notions and tools from \cite{AC1}.	
Our purpose is to suitably manipulate the $t$--spread monomials for $t\ge 1$.

Let $t\ge 1$ and $n$ be a positive integer. Let $(k,\ell)$ be a pair of positive integers such that $k+t(\ell-1)+1\le n$, we define the set
$$
A^t(k,\ell):=\big\{u\in M_{n,\ell,t}:\max(u)=k+t(\ell-1)+1\big\}.
$$
	
\begin{Rem}\rm
In \cite{AC1}, if $t=1$, the set $A^1(k,\ell)$ is denoted by $A^s(k,\ell)$ and consists of all squarefree monomials $u\in S$ of degree $\ell$ such that $\max(u) = k+\ell$.
\end{Rem}
	
\begin{Expl} \label{sec2:es1}\em Let $t=3$, $(k,\ell)=(5, 2)$ and $n\ge k+t(\ell-1)+1=9$. 

If $n=9$, then
\[
A^3(5,2)=\big\{u\in M_{9,2,3}:\max(u)=9\big\}
=\{x_1x_9,x_2x_9,x_3x_9,x_4x_9,x_5x_9,x_6x_9\}
\]
\end{Expl}
	
From Corollary \ref{AC1 cor2}, it is clear that in order to obtain a numerical characterization of the extremal Betti numbers of a $t$--spread strongly stable ideal $I$ of $S$ one has to consider the sets $A^t(k,\ell)$. 
	
Indeed, if $\beta_{k,k+\ell}(I)=a$ is an extremal Betti number of $I$, then
$$
a=\beta_{k,k+\ell}(I)=\Big|\Big\{ u\in G(I)_\ell:\max(u)=k+t(\ell-1)+1 \Big\}\Big|.
$$
	
Furthermore, since $$\big\{ u\in G(I)_\ell:\max(u)=k+t(\ell-1)+1\big\}\subseteq A^t(k,\ell),$$ then 
$$a= \beta_{k,k+\ell}(I)\le |A^t(k,\ell)|.$$
	
Hence, our first aim is to determine the cardinality of the sets $A^t(k,\ell)$.
	
From \cite[Theorem 2.3]{EHQ} (see also \cite{CAC}), the cardinality of $M_{n,d,t}$ is given by 
\begin{equation}\label{eqcarnumherz}
|M_{n,d,t}|=\binom{n-(d-1)(t-1)}{d}.
\end{equation}
	
\begin{Lem}\label{lemmacarnum1}
Let $k,\ell,t$ be three positive integers. Then $|A^t(k,\ell)|=\binom{k+\ell-1}{\ell-1}$. 
\end{Lem}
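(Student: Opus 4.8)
The plan is to exhibit an explicit bijection between $A^t(k,\ell)$ and the set $M_{m,\ell-1,t}$ of all $t$--spread monomials of degree $\ell-1$ in $K[x_1,\dots,x_m]$, where $m:=k+t(\ell-2)+1$, and then to invoke the cardinality formula (\ref{eqcarnumherz}).

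First I would observe that every $u\in A^t(k,\ell)$ can be written uniquely as $u=x_{i_1}\cdots x_{i_{\ell-1}}x_{k+t(\ell-1)+1}$ with $1\le i_1<\dots<i_{\ell-1}$ and consecutive gaps at least $t$; since $u$ is $t$--spread, the last gap forces $i_{\ell-1}\le k+t(\ell-1)+1-t=k+t(\ell-2)+1=m$. Hence $v:=u/x_{k+t(\ell-1)+1}$ is a $t$--spread monomial of degree $\ell-1$ with $\max(v)\le m$, i.e.\ $v\in M_{m,\ell-1,t}$. Conversely, for any $v\in M_{m,\ell-1,t}$ the monomial $vx_{k+t(\ell-1)+1}$ is again $t$--spread, because the newly created gap equals $\big(k+t(\ell-1)+1\big)-\max(v)\ge\big(k+t(\ell-1)+1\big)-m=t$, and it plainly lies in $A^t(k,\ell)$. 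These two assignments are mutually inverse, so $|A^t(k,\ell)|=|M_{m,\ell-1,t}|$. (When $\ell=1$ the argument degenerates to the trivial identity $A^t(k,1)=\{x_{k+1}\}$, which matches $\binom{k}{0}=1$.)

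It then remains to substitute $n\mapsto m=k+t(\ell-2)+1$ and $d\mapsto\ell-1$ into (\ref{eqcarnumherz}) and simplify the upper entry of the resulting binomial coefficient:
$$
\big(k+t(\ell-2)+1\big)-(\ell-2)(t-1)=k+1+(\ell-2)=k+\ell-1,
$$
which yields $|A^t(k,\ell)|=\binom{k+\ell-1}{\ell-1}$, as claimed.

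I do not expect a genuine obstacle: the only points needing (minor) care are the degenerate case $\ell=1$ and the remark that the target $M_{m,\ell-1,t}$ of the bijection is always nonempty for $\ell\ge2$ — this holds because $k\ge1$ gives $m-(\ell-2)(t-1)=k+\ell-1\ge\ell-1$. Ultimately the whole computation is the classical observation, which already underlies (\ref{eqcarnumherz}), that the shift $i_s\mapsto i_s-(s-1)(t-1)$ converts $t$--spread sequences into strictly increasing ones; a direct induction on $\ell$ via Pascal's rule would serve equally well.
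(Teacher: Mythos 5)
Your proof is correct and follows essentially the same route as the paper's: both strip off the fixed largest variable $x_{k+t(\ell-1)+1}$ to identify $A^t(k,\ell)$ bijectively with $M_{k+t(\ell-2)+1,\,\ell-1,\,t}$ and then invoke the cardinality formula (\ref{eqcarnumherz}). You are a bit more explicit in verifying the inverse map and the degenerate case $\ell=1$, but there is no substantive difference.
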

\begin{proof}
Let $u\in A^t(k,\ell)$, then $u=x_{i_1}x_{i_2}\cdots x_{i_{\ell}}$ is a $t$--spread momomial of degree $\ell$, with $\max(u)=k+(\ell-1)t+1$. Set $\widetilde{u}=x_{i_1}x_{i_2}\cdots x_{i_{\ell-1}}$, then $\widetilde{u}$ is a $t$--spread monomial of degree $\ell-1$ with $\max(\widetilde{u})\le\max(u)-t=k+(\ell-2)t+1$. Hence,
$$|A^t(k,\ell)| = \vert M_{k+(\ell-2)t+1,\ell-1,t}\vert.$$
Therefore, by (\ref{eqcarnumherz}), we have
\begin{eqnarray*}
|A^t(k,\ell)|=|M_{k+(\ell-2)t+1,\ell-1,t}|&=&\binom{k+(\ell-2)t+1-(\ell-1-1)(t-1)}{\ell-1}\\
&=& \binom{k+\ell-1}{\ell-1}.
\end{eqnarray*}
\end{proof}\medskip
	
For instance, if we consider Example \ref{sec2:es1}, then $A^3(5,2)$ consists of six  monomials. On the other hand, by Lemma \ref{lemmacarnum1}, $|A^3(5,2)|=\binom{5+2-1}{2-1}=6$.\\
	
Setting $A^t(k,\ell) = \{u_1,\dots,u_q\}$, $q=|A^t(k,\ell)|=\binom{k+\ell-1}{\ell-1}$, we can suppose, after a permutation of the indices, that  
\begin{equation}\label{eqcarnumor}
u_1>_{\slex}u_2>_{\slex}\dots>_{\slex}u_q.
\end{equation}
Thus, for the $i$--th ($1\le i\le q$) monomial $u$ of degree $\ell$ with $\max(u)= k+(\ell-1)t+1$, we mean the monomial of $A^t(k,\ell)$ that appears in the $i$--th position of (\ref{eqcarnumor}).
	
Now, let $(k,\ell)$ be a corner of a $t$--spread strongly stable ideal $I$ of $S$, and let $\beta_{k,k+\ell}(I)=a$. The bound $a\le|A^t(k,\ell)|$ can be improved. More precisely, let $u$ be the smallest $t$--spread monomial  belonging to $G(I)_\ell$ with $\max(u)=k+t(\ell-1)+1$, with respect to $>_{\slex}$. Thus, $u$ is equal to some $u_j\in A^t(k,\ell)$, $1\le j\le |A^t(k,\ell)|$. Therefore, by Corollary \ref{AC1 cor2},
\begin{equation}\label{sec3:eq1}
\begin{aligned}
a=\beta_{k,k+\ell}(I)=& \Big|\big\{ u\in G(I)_\ell:\max(u)=k+t(\ell-1)+1 \big\}\Big|\\
 \le & \Big|\big\{v\in A^t(k,\ell):v\ge_{\slex}u_j\big\}\Big|.
\end{aligned}
\end{equation}

	
\begin{Rem}\rm
A subset $L$ of $M_{n,d,t}$ is called $t$--spread lex set, if for all $u\in L$ and for all $v\in M_{n,d,t}$ with $v\ge_{\slex} u$, it follows that $v\in L$. We observe that in (\ref{sec3:eq1}), the equality  holds if and only if $G(I)_\ell$ is a $t$--spread lex set.
\end{Rem}

Now, we want to compute the cardinality of the set $\big\{v\in A^t(k,\ell):v\ge_{\slex}u_j\big\}$. 
	
To do this, we introduce the following subsets of $A^t(k,\ell)$. Let $u_i, u_j$, $i<j$, be two monomials of (\ref{eqcarnumor}), the sets
\begin{align*}
[u_i,u_j]&:=\big\{w\in A^t(k,\ell):u_i\ge_{\slex}w\ge_{\slex}u_j\big\},\\
[u_i,u_j)&:=\big\{w\in A^t(k,\ell):u_i\ge_{\slex}w>_{\slex}u_j\big\}
\end{align*}
are called \emph{segment of $A^t(k,\ell)$ of initial element $u_i$ and final element $u_j$} and \emph{left segment of $A^t(k,\ell)$ of initial element $u_i$ and final element $u_j$}, respectively.
	
Hence, the set $\big\{v\in A^t(k,\ell):v\ge_{\slex}u_j\big\}$ can be written as follow:
$$
[x_1x_{1+t}\dots x_{1+t(\ell-2)}x_{k+t(\ell-1)+1},u_j].
$$
Indeed, $x_1x_{1+t}\dots x_{1+t(\ell-2)}x_{k+t(\ell-1)+1}$ is the greatest monomial of $A^t(k,\ell)$ with respect to $>_{\slex}$.
	
In order to compute the cardinality of $[x_1x_{1+t}\dots x_{1+t(\ell-2)}x_{k+t(\ell-1)+1},u_j]$, we illustrate how one may decompose $|A^t(k,\ell)|$ \emph{via} $\min(u)$, for all $u\in A^t(k,\ell)$.
	
	
\begin{Prop}\cite[Lemma~4.3]{AC1}\label{propcarnum1}
Let $k,\ell$ be positive integers. Then
\begin{equation}\label{eqcarnumdecomp}
\begin{aligned}
\binom{k+\ell-1}{\ell-1}&=\sum_{i=1}^{k+1}\binom{k+\ell-1-i}{\ell-2}\\
&=\binom{k+\ell-2}{\ell-2}+\binom{k+\ell-3}{\ell-2}+\dots+\binom{\ell-2}{\ell-2}.
\end{aligned}
\end{equation}
\end{Prop}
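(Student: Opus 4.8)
The identity is a hockey-stick (Christmas-stocking) type identity, and my plan is to prove it by a short telescoping argument. Using Pascal's rule in the form $\binom{k+\ell-1-i}{\ell-2}=\binom{k+\ell-i}{\ell-1}-\binom{k+\ell-1-i}{\ell-1}$, which holds for every $i$, the sum $\sum_{i=1}^{k+1}\binom{k+\ell-1-i}{\ell-2}$ collapses telescopically: writing $a_i=\binom{k+\ell-i}{\ell-1}$, the $i$--th summand is $a_i-a_{i+1}$, so the whole sum equals $a_1-a_{k+2}=\binom{k+\ell-1}{\ell-1}-\binom{\ell-2}{\ell-1}$. Since $0\le\ell-2<\ell-1$ (we may assume $\ell\ge2$, the case $\ell=1$ being trivial), the subtracted term vanishes, which yields the first equality in (\ref{eqcarnumdecomp}); the second equality is just writing out the summation explicitly. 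Equivalently, one can run an induction on $k$: the case $k=0$ reads $\binom{\ell-1}{\ell-1}=\binom{\ell-2}{\ell-2}$, and the inductive step is a single application of Pascal's rule after shifting the index of summation by one.

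An alternative, which is the viewpoint suggested by the discussion preceding the statement, is to read the identity off directly from a decomposition of $A^t(k,\ell)$ according to $\min(u)$. If $u=x_{i_1}\cdots x_{i_\ell}\in A^t(k,\ell)$, then $i_\ell=k+t(\ell-1)+1$, and the $t$--spread condition forces $i_1\le i_\ell-t(\ell-1)=k+1$, so $\min(u)$ ranges over $\{1,\dots,k+1\}$. For a fixed value $\min(u)=i$, stripping off the variable $x_{i_\ell}$ exactly as in the proof of Lemma \ref{lemmacarnum1} identifies the monomials of $A^t(k,\ell)$ with $\min=i$ with the $t$--spread monomials of degree $\ell-2$ supported on the consecutive variables $x_{i+t},\dots,x_{k+t(\ell-2)+1}$; by (\ref{eqcarnumherz}) there are exactly $\binom{k+\ell-1-i}{\ell-2}$ of these. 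Summing over $i=1,\dots,k+1$ partitions $A^t(k,\ell)$, and invoking Lemma \ref{lemmacarnum1} for the total cardinality on the left-hand side gives (\ref{eqcarnumdecomp}).

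There is no genuine obstacle here; the only points requiring care are the index bookkeeping in the telescoping (resp.\ the justification that $\min(u)\le k+1$) and the degenerate small cases, e.g.\ $\ell=2$, where the identity reduces to $\binom{k+1}{1}=\underbrace{1+\cdots+1}_{k+1}$. I would present the telescoping (or the one-line induction) as the main proof because it is entirely self-contained, and record the $\min(u)$--decomposition of $A^t(k,\ell)$ afterwards, since it is the combinatorial content actually exploited later in the paper.
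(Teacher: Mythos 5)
Your telescoping argument is correct, and it is a genuinely different route from the paper. The paper does not give a direct proof of Proposition \ref{propcarnum1} (it is quoted from \cite[Lemma~4.3]{AC1}); what the paper \emph{does} supply, in Remark \ref{rem:carnum1} immediately afterward, is precisely your ``alternative'' argument: decompose $A^t(k,\ell)$ by $\min(u)$, observe $\min(u)\le k+1$, and identify the fiber $\{u:\min(u)=i\}$ with a smaller $A^t(k-i+1,\ell-1)$ (equivalently, with a set of $t$--spread monomials of degree $\ell-2$ after also fixing the top variable), giving $b_i=\binom{k+\ell-1-i}{\ell-2}$. So your second paragraph reproduces the paper's content. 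Your primary proof via Pascal's rule $\binom{k+\ell-1-i}{\ell-2}=\binom{k+\ell-i}{\ell-1}-\binom{k+\ell-1-i}{\ell-1}$ and the resulting collapse $\sum_{i=1}^{k+1}(a_i-a_{i+1})=a_1-a_{k+2}=\binom{k+\ell-1}{\ell-1}-\binom{\ell-2}{\ell-1}$ is entirely self-contained, needs no $t$--spread combinatorics, and is arguably the cleaner way to establish the pure identity; the combinatorial version buys the partition of $A^t(k,\ell)$ that the rest of the paper actually exploits (in the proof of Theorem \ref{teorcarnum1}). One small inaccuracy: the case $\ell=1$ is not ``trivial'' but rather excluded --- with the standard convention $\binom{n}{-1}=0$ the right-hand side vanishes while the left-hand side is $1$. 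The statement implicitly assumes $\ell\ge2$ (so that $\binom{\ell-2}{\ell-2}$ makes sense), which is also the only regime used in the paper, so nothing is lost, but you should say ``the identity is stated for $\ell\ge2$'' rather than dismissing $\ell=1$ as trivial.
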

	
%

\begin{Rem}\label{rem:carnum1} \em
It is important to discuss the binomial coefficients described in Proposition \ref{propcarnum1}. 

By Lemma \ref{lemmacarnum1}, $\binom{k+\ell-1}{\ell-1} = |A^t(k,\ell)|$. Therefore $\binom{k+\ell-1}{\ell-1}$ gives the number of $t$--spread monomials $u$ of degree $\ell$ with $\max(u)=k+t(\ell-1)+1$. 

Note that for a monomial $u\in A^t(k,\ell)$ one has $\min(u)\le k+1$. Indeed, the smallest monomial of $A^t(k,\ell)$ with respect to $>_{\slex}$ is
$$x_{k+1}x_{k+t+1}x_{k+2t+1}\cdots x_{k+(\ell-1)t+1}.$$
Hence
$$
\big|A^t(k,\ell)\big|=\binom{k+\ell-1}{\ell-1}=\sum_{i=1}^{k+1}b_i,
$$
where $b_i=\left|\big\{u\in A^t(k,\ell):\min(u)=i\big\}\right|$, for all $i=1,\dots,k+1$.
		
Now, fix $i\in\{1,\dots,k+1\}$. The integer $b_i$ is the number of all monomials $u$ of $A^t(k,\ell)$ with $\min(u)=i$. If $u$ is a monomial of this type, then $u$ can be written as $u=x_{i_1}x_{i_2}\cdots x_{i_\ell}$, with $i_1=i$, $i_\ell=k+t(\ell-1)+1$ and $i_2\ge i_1+t=i+t$. Setting $\widetilde{u}=x_{i_2-(i+t)+1}x_{i_3-(i+t)+1}\cdots x_{i_\ell-(i+t)+1}$, then $i_2-(i+t)+1\ge 1$. Therefore, computing the number of all monomials $u$ of $A^t(k,\ell)$ with $\min(u)=i$ is equivalent to determining the number of all $t$--spread monomials $\widetilde{u}$ of degree $\ell-1$ with $\min(u)\ge 1$ and $\max(u)=i_\ell-(i+t)+1=(k-i+1)+(\ell-2)t+1$; such a number is the cardinality of the set $A^t(k-i+1,\ell-1)$. Therefore
$$
b_i=\big|A^t(k-i+1,\ell-1)\big|=\binom{k-i+1+\ell-1-1}{\ell-1-1}=\binom{k+\ell-1-i}{\ell-2}.
$$
\end{Rem}
	

For a positive integer $q$, we set $[q] = \{1,\dots,q\}$. Following \cite{AC1}, we introduce the following definition that will have a key role throughout the paper.

\begin{Def}\label{def:tgap} \rm
Let $u=x_{i_1}x_{i_2}\cdots x_{i_d}$ be a $t$--spread monomial of degree $d$ of $S$, $1\le i_1<i_2<\ldots<i_d\le n$. Let $j\in[d-1]$. We say that $u$ has a \textit{$j\gap_t$} if
$$
i_{j+1}-i_j-t>0.
$$
If $u$ has a \textit{$j\gap_t$}, then we set 
$$\wdt(j\gap_t)(u):=i_{j+1}-i_j-t$$ and we call it the \textit{width of the $j\gap_t$}. 
\end{Def}
	
We define $\Gap_t(u)$ by
$$
\Gap_t(u):=\big\{j\in[d-1]:\text{there exists a}\ j\gap_t(u)\big\}.
$$
The following example illustrates the \emph{combinatorial meaning} of the width of a $j\gap_t$.
\begin{Expl}\rm
Let $t=2$ and $u=x_{i_1}x_{i_2}x_{i_3}x_{i_4}x_{i_5}=x_2x_4x_6x_{13}x_{15}\in$ $K[x_1, \ldots, x_{15}]$ be a $2$--spread monomial of degree $5$. We have $\Gap_2(u)=\{3\}$ and $\wdt(3\gap_2)$ $(u)=13-6-2=5$. The width has the following meaning: \emph{there exist exactly five variables} (\emph{each of them has index smaller than $13$}) \emph{that may take the place of $x_{i_4}=x_{13}$ in $u$ in order to still obtain a $2$--spread monomial}. These variables are $x_{12},x_{11},x_{10},x_9,x_8$. Indeed,
\begin{align*}
x_{12}(u/x_{13})&=x_2x_4x_6x_{12}x_{15}, & &\\
x_{11}(u/x_{13})&=x_2x_4x_6x_{11}x_{15}, & x_9(u/x_{13})&=x_2x_4x_6x_9x_{15},\\
x_{10}(u/x_{13})&=x_2x_4x_6x_{10}x_{15}, & x_8(u/x_{13})&=x_2x_4x_6x_8x_{15},
\end{align*}
are all $2$--spread monomials.
\end{Expl}\medskip
	
There is another reason to consider the $j\gap_t$ of a $t$--spread monomial $u\in A^t(k,\ell)$, the set $\Gap_t(u)$ allow us to find the largest $t$--spread monomial of $A^t(k,\ell)$ that follows $u$ in the order $>_{\slex}$, 
as the next result shows.
	
\begin{Prop}
Let $n,t$ be positive integers. Let $(k,\ell)$ be a pair of positive integers such that $k+t(\ell-1)+1\le n$. Let $u=x_{i_1}x_{i_2}\cdots x_{i_\ell}$, $1\le i_1<i_2<\dots<i_\ell\le n$ be a $t$--spread monomial of $A^t(k,\ell)$, that is $i_\ell=k+t(\ell-1)+1$. If $\Gap_t(u)=\emptyset$, then $u$ is the smallest $t$--spread monomial of $A^t(k,\ell)$ with respect to $>_{\slex}$. Otherwise, if $p=\max\Gap_t(u)$, then the greatest  monomial of $A^t(k,\ell)$ following $u$ in the squarefree lexicographic order is
$$
x_{i_1}\cdots x_{i_{p-1}}x_{i_p+1}x_{i_p+1+t}\cdots x_{i_p+1+t(\ell-p-1)}x_{k+t(\ell-1)+1}.
$$
\end{Prop}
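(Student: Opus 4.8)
The plan is to argue directly about the squarefree lexicographic order on $A^t(k,\ell)$. First I would dispose of the case $\Gap_t(u)=\emptyset$: if $u=x_{i_1}x_{i_2}\cdots x_{i_\ell}$ lies in $A^t(k,\ell)$ and has no $j\gap_t$, then $i_{j+1}-i_j=t$ for every $j\in[\ell-1]$, so $i_j=i_1+t(j-1)$ for all $j$. Since $i_\ell=k+t(\ell-1)+1$, this forces $i_1=k+1$, hence $u=x_{k+1}x_{k+t+1}\cdots x_{k+t(\ell-1)+1}$, which is exactly the smallest monomial of $A^t(k,\ell)$ identified in Remark \ref{rem:carnum1}. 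For the main case, set $p=\max\Gap_t(u)$ and let $v$ denote the candidate monomial
$$
v=x_{i_1}\cdots x_{i_{p-1}}x_{i_p+1}x_{i_p+1+t}\cdots x_{i_p+1+t(\ell-p-1)}x_{k+t(\ell-1)+1}.
$$

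Next I would check that $v$ is well-defined and lies in $A^t(k,\ell)$. The first $p-1$ exponents agree with those of $u$, so they are $t$--spread; the gap between $x_{i_{p-1}}$ and $x_{i_p+1}$ is $i_p+1-i_{p-1}\ge t+1>t$ (here $p-1\notin\Gap_t(u)$ is not needed, only that $u$ itself is $t$--spread, giving $i_p-i_{p-1}\ge t$); the block $x_{i_p+1},x_{i_p+1+t},\dots,x_{i_p+1+t(\ell-p-1)}$ is by construction a consecutive $t$--spread run of length $\ell-p$; and the last gap, between $x_{i_p+1+t(\ell-p-1)}$ and $x_{k+t(\ell-1)+1}$, equals $k+t(\ell-1)+1-(i_p+1+t(\ell-p-1))=k+tp-i_p\ge t$ because $i_p\le k+t(p-1)+1\le k+tp$ — this last inequality follows since $\max(u)=i_\ell=k+t(\ell-1)+1$ and $u$ is $t$--spread forces $i_p\le i_\ell-t(\ell-p)=k+t(p-1)+1$. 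So $v$ is $t$--spread of degree $\ell$ with $\max(v)=k+t(\ell-1)+1$, i.e. $v\in A^t(k,\ell)$.

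Then I would verify $u>_{\slex}v$ and that nothing of $A^t(k,\ell)$ lies strictly between them. For $u>_{\slex}v$: the two monomials share the first $p-1$ indices, and in position $p$ we compare $i_p$ (for $u$) against $i_p+1$ (for $v$); since $i_p<i_p+1$, indeed $u>_{\slex}v$. For maximality among monomials following $u$, suppose $w=x_{j_1}\cdots x_{j_\ell}\in A^t(k,\ell)$ with $u>_{\slex}w$; I must show $v\ge_{\slex}w$. Let $s$ be the first index with $j_s\ne i_s$; then $j_s>i_s$, and $j_r=i_r$ for $r<s$. Because $p=\max\Gap_t(u)$, for every $r$ with $p\le r\le \ell-1$ we have $i_{r+1}=i_r+t$, so $i_{s}$ for $s>p$ is already forced to be the smallest possible value given $i_{s-1}=j_{s-1}$ and the requirement $j_\ell=k+t(\ell-1)+1$; more precisely, for $s>p$ one computes $i_s=k+t(\ell-1)+1-t(\ell-s)=k+t(s-1)+1$, which is the minimum allowed value of the $s$-th index of any monomial in $A^t(k,\ell)$ whose first $s-1$ indices coincide with those of $u$, contradicting $j_s>i_s$ unless no such $s$ exists in that range; hence $s\le p$. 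If $s<p$, then comparing $w$ with $v$ in position $s$ gives $j_s>i_s$, so $v>_{\slex}w$. If $s=p$, then $j_p>i_p$, i.e. $j_p\ge i_p+1$; comparing $w$ with $v$ in positions $1,\dots,p-1$ they agree, and in position $p$ the index of $v$ is $i_p+1\le j_p$, so $v\ge_{\slex}w$ with equality only if $j_p=i_p+1$ and then the $t$--spread constraint together with $\max=k+t(\ell-1)+1$ forces $j_r=i_p+1+t(r-p)$ for $p\le r<\ell$, i.e. $w=v$. This establishes that $v$ is the largest element of $A^t(k,\ell)$ strictly below $u$.

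The main obstacle I anticipate is the bookkeeping in the final step — specifically, showing cleanly that for positions $s>p$ the index is \emph{forced}, so that any monomial following $u$ must already differ from $u$ in some position $\le p$. This hinges on combining two facts: that $p=\max\Gap_t(u)$ makes the tail $i_p,i_{p+1},\dots,i_\ell$ a rigid $t$--spread run pinned at the top by $i_\ell=k+t(\ell-1)+1$, and that being $t$--spread means an index cannot be \emph{smaller} than the value dictated by the previous index plus $t$. Once these are phrased correctly, the comparison with $>_{\slex}$ is routine.
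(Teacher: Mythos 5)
Your proof follows essentially the same route as the paper: you reduce everything to showing that any $w\in A^t(k,\ell)$ with $u>_{\slex}w$ must disagree with $u$ in some position $\le p$ (you call it $s$, the paper calls it $j$), using the rigidity of the tail $i_p,\dots,i_\ell$ pinned at $i_\ell=k+t(\ell-1)+1$, and then compare $v$ with $w$ position by position. You add two verifications that the paper omits, namely that $v$ actually lies in $A^t(k,\ell)$ and that the final ``clearly'' (maximality of $v$) holds; both are worthwhile additions.

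Two small corrections, though, the first of which is a genuine slip. In the $t$--spread check of $v$ at the last step, you bound $i_p\le k+t(p-1)+1$ (which follows from $t$--spreadness alone) and conclude $k+tp-i_p\ge t$; but that bound only yields $k+tp-i_p\ge t-1$. You must use the hypothesis $p\in\Gap_t(u)$, which gives $i_{p+1}-i_p\ge t+1$; combined with $i_{p+1}=i_\ell-t(\ell-p-1)=k+tp+1$ (since $p=\max\Gap_t(u)$), this yields the sharper bound $i_p\le k+t(p-1)$ and hence $k+tp-i_p\ge t$ as required. Second, in your forced-tail argument you call $i_s=k+t(s-1)+1$ the ``minimum allowed value'' of the $s$--th index; it is in fact the \emph{maximum} allowed value (any larger $j_s$ would leave no room to place the remaining $\ell-s$ indices with spread $\ge t$ while still ending at $k+t(\ell-1)+1$), and it is this \emph{upper} bound, combined with the requirement $j_s>i_s$, that gives the contradiction --- exactly the inequality chain $t(\ell-s)\le s_\ell-s_s<i_\ell-i_s=t(\ell-s)$ used in the paper.
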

\begin{proof} The proof is similar to the proof of \cite[Lemma 3.1]{AFC}. We include it for the reader's convenience.
	
If $\Gap_t(u)=\emptyset$, then $u=x_kx_{k+t+1}\cdots x_{k+t(\ell-2)+1}x_{k+t(\ell-1)+1}$. Hence, $u$ is the smallest $t$--spread monomial of degree $d$.
		
Now, let $\Gap_t(u)\ne\emptyset$ and let $p=\max\Gap_t(u)$.
If $w=x_{s_1}x_{s_2}\cdots x_{s_\ell}$ is a monomial of $A^t(k,\ell)$ with $u>_{\slex}w$, then  $i_1=s_1,\dots,i_{j-1}=s_{j-1}$ and $i_j<s_j$ for some index $j$. We have $j\le p$. Indeed, if $j>p$, then $i_{p+2}-i_{p+1}=\dots=i_\ell-i_{\ell-1}=t$. Hence, $i_{\ell}-i_j=t(\ell-j)$, $i_\ell=s_\ell=k+t(\ell-1)+1$, $s_{\ell}-s_j\ge t(\ell-j)$ and $i_j<s_j$. Therefore, we have
$$
t(\ell-j)\le s_{\ell}-s_j=i_\ell-s_j<i_\ell-i_j=t(\ell-j),
$$
and consequently $t(\ell-j)<t(\ell-j)$. This is absurd. So $j\le p$.
		
Let
$$
v=x_{j_1}x_{j_2}\cdots x_{j_d}=x_{i_1}\cdots x_{i_{p-1}}x_{i_p+1}x_{i_p+1+t}\cdots x_{i_p+1+t(\ell-p-1)}x_{k+t(\ell-1)+1}.
$$
	
Since $i_1=j_1,\dots,i_{p-1}=j_{p-1}$ and $i_p<j_p=i_p+1$, then $u>_{\slex}v$. Clearly, $v$ is the greatest monomial of $A^t(k,\ell)$ following $u$ in the squarefree lexicographic order.
\end{proof}
\vspace{0,3cm}
	
%
%
%
For istance, if $u=x_{i_1}x_{i_2}x_{i_3}x_{i_4}x_{i_5}=x_2x_6x_{11}x_{14}x_{17}\in A^3(4,5)=A^t(k,\ell)$, then $p=\max\Gap_3(u)=2$, and the greatest monomial $v$ of $A^3(4,5)$ that follows $u$, with respect to $>_{\slex}$, is
\begin{align*}
v&=x_{i_1}\cdots x_{i_{p-1}}x_{i_p+1}x_{i_p+1+t}\cdots x_{i_p+1+t(\ell-p-1)}x_{k+t(\ell-1)+1}\\
&=x_{i_1}x_{i_2+1}x_{i_2+1+t}x_{i_2+1+2t}x_{k+t(\ell-1)+1}\\
&=x_2x_7x_{10}x_{13}x_{17}.
\end{align*}
\vspace{0,2cm}
	
	
The following result shows that the cardinality of the segment
$$[x_1x_{1+t}\cdots x_{1+t(\ell-2)}x_{k+t(\ell-1)+1},u]$$
depends on the structure of the monomial $u$. More precisely, in order to compute $|[x_1x_{1+t}\cdots x_{1+t(\ell-2)}x_{k+t(\ell-1)+1},u]|$, one needs to analyze the set $\Gap_t(u/x_{\max(u)})$. 

\begin{Thm}\label{teorcarnum1}
Let $(k,\ell)$ be a pair of positive integers with $\ell\ge2$ and let $u=x_{i_1}x_{i_2}\cdots $ $x_{i_{\ell-1}}x_{i_\ell}$ be a monomial of $A^t(k,\ell)$, $t\ge1$. 
		
Setting $\widetilde{u}=x_{i_1}x_{i_2}\cdots x_{i_{\ell-1}}$, then $|[x_1x_{1+t}\cdots x_{1+t(\ell-2)}x_{k+t(\ell-1)+1},u]|$ is a sum of $r$ suitable binomial coefficients, where
		
$$
r=\begin{cases}
i_1,&\text{if}\ \Gap_t(\widetilde{u})=\emptyset,\\
i_1+\sum_{j=1}^p\wdt(g_j\gap_t)(\widetilde{u}),&\text{if}\ \Gap_t(\widetilde{u})=\{g_1,\dots,g_p\}\ne\emptyset.
\end{cases}
$$
\end{Thm}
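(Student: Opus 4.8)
The plan is to convert the segment count into a count inside $M_{k+t(\ell-2)+1,\ell-1,t}$, and then to enumerate the monomials lying $>_{\slex}$-above $\widetilde{u}$ by recording where they first disagree with $\widetilde{u}$.

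First I would use that every element of $A^t(k,\ell)$ ends with the variable $x_{k+t(\ell-1)+1}$. Hence $w\mapsto\widetilde{w}:=w/x_{k+t(\ell-1)+1}$ is a bijection from $A^t(k,\ell)$ onto $M_{k+t(\ell-2)+1,\ell-1,t}$, and because the deleted exponent is common to all elements of $A^t(k,\ell)$, this bijection is strictly order-preserving for $>_{\slex}$. Moreover $x_1x_{1+t}\cdots x_{1+t(\ell-2)}x_{k+t(\ell-1)+1}$ is the largest monomial of $A^t(k,\ell)$ with respect to $>_{\slex}$, so the segment in the statement is nothing but $\{w\in A^t(k,\ell):w\ge_{\slex}u\}$; therefore
$$|[x_1x_{1+t}\cdots x_{1+t(\ell-2)}x_{k+t(\ell-1)+1},\,u]|=|\{v\in M_{k+t(\ell-2)+1,\ell-1,t}:v\ge_{\slex}\widetilde{u}\}|.$$

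Next I would count the right-hand side. Write $\widetilde{u}=x_{i_1}\cdots x_{i_{\ell-1}}$ and $v=x_{j_1}\cdots x_{j_{\ell-1}}$. Apart from $v=\widetilde{u}$ itself, every $v\ge_{\slex}\widetilde{u}$ has a well-defined first position $s\in[\ell-1]$ with $j_1=i_1,\dots,j_{s-1}=i_{s-1}$ and $j_s<i_s$. For a fixed $s$ and a fixed value $c=j_s$, the monomials of this shape are exactly $x_{i_1}\cdots x_{i_{s-1}}x_c$ times a $t$--spread monomial of degree $\ell-1-s$ whose smallest index is $\ge c+t$ and whose largest index is $\le k+t(\ell-2)+1$; after shifting indices this is counted by $|M_{k+t(\ell-2)+1-(c+t)+1,\,\ell-1-s,\,t}|$, which by $(\ref{eqcarnumherz})$ is a single binomial coefficient (equal to $1$ when $s=\ell-1$). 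The admissible values of $c$ are $c\in\{1,\dots,i_1-1\}$ when $s=1$, and $c\in\{i_{s-1}+t,\dots,i_s-1\}$ when $2\le s\le\ell-1$; the latter range is nonempty exactly when $i_s-i_{s-1}-t>0$, that is, when $\widetilde{u}$ has an $(s-1)\gap_t$, in which case it has $\wdt((s-1)\gap_t)(\widetilde{u})$ elements. Adding the single monomial $v=\widetilde{u}$, whose contribution I would record as the binomial coefficient $\binom{\ell-2}{\ell-2}=1$, the total is a sum of
$$1+(i_1-1)+\sum_{g\in\Gap_t(\widetilde{u})}\wdt(g\gap_t)(\widetilde{u})=i_1+\sum_{g\in\Gap_t(\widetilde{u})}\wdt(g\gap_t)(\widetilde{u})$$
binomial coefficients. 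This equals $i_1$ if $\Gap_t(\widetilde{u})=\emptyset$ and $i_1+\sum_{j=1}^{p}\wdt(g_j\gap_t)(\widetilde{u})$ if $\Gap_t(\widetilde{u})=\{g_1,\dots,g_p\}\ne\emptyset$, which is exactly the asserted value of $r$.

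The part that needs care, rather than any conceptual difficulty, is the bookkeeping in the last step: one must check that dropping the top variable really is an order-preserving bijection onto $M_{k+t(\ell-2)+1,\ell-1,t}$, that each completion count is genuinely a binomial coefficient (this is precisely where $(\ref{eqcarnumherz})$ and the index shift enter), and --- the crux --- that the blocks of admissible values of $c$ match the $t$--gaps of $\widetilde{u}$: the block $s=1$ provides $i_1-1$ terms which, together with the single term coming from $v=\widetilde{u}$, produce the summand $i_1$, while for $s\ge2$ the block is nonempty precisely when $s-1\in\Gap_t(\widetilde{u})$ and then contributes exactly $\wdt((s-1)\gap_t)(\widetilde{u})$ terms. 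An essentially equivalent alternative is induction on $\ell$: split according to whether $j_1<i_1$ or $j_1=i_1$, handle the first case as above, and reduce the second case (via $v\mapsto v/x_{i_1}$ with indices shifted down by $i_1+t-1$) to the same statement for the monomial $x_{i_2}\cdots x_{i_{\ell-1}}$ in one fewer degree; the direct count above seems the cleaner of the two.
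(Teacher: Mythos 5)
Your proof is correct and rests on the same key idea as the paper's own argument: partition the monomials $v\ge_{\slex}\widetilde{u}$ by the first position $s$ at which they disagree with $\widetilde{u}$ and by the value $c=j_s$ taken there, observe that for $s\ge2$ this forces $s-1\in\Gap_t(\widetilde{u})$ and that the admissible values of $c$ are exactly $\wdt\big((s-1)\gap_t\big)(\widetilde{u})$ in number, and count each resulting class by a single binomial coefficient. Your presentation is a bit more streamlined --- you first drop the common final variable to pass to $M_{k+t(\ell-2)+1,\ell-1,t}$ and invoke formula (\ref{eqcarnumherz}) directly for each class, whereas the paper reaches the same partition by iteratively re-applying the binomial decomposition of Proposition~\ref{propcarnum1} --- but the combinatorial content and the identification of $r$ are the same.
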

\begin{proof}
Let $m=|[x_1x_{1+t}\cdots x_{1+t(\ell-2)}x_{k+t(\ell-1)+1},u]|$, \emph{i.e.}, $m$ is the number of all monomials $w\in A^t(k,\ell)$ such that $w\ge_{\slex}u$. By Proposition \ref{propcarnum1}, the binomial coefficient $\binom{k+\ell-1}{\ell-1}$ can be decomposed as a sum of $k+1$ binomial coefficients:
\begin{equation}
\begin{aligned}\label{eqcarnum2}
\binom{k+\ell-1}{\ell-1}&=\sum_{j=1}^{k+1}\binom{k+\ell-1-j}{\ell-2}\\
&=\binom{k+\ell-2}{\ell-2}+\binom{k+\ell-3}{\ell-2}+\dots+\binom{\ell-2}{\ell-2}.
\end{aligned}
\end{equation}
		
From Remark \ref{rem:carnum1}, for all $j=1,\dots,k+1$, the $j$--th binomial coefficient of (\ref{eqcarnum2}), $\binom{k+\ell-1-j}{\ell-2}$, counts the number of $t$--spread monomials $w$ of degree $\ell$ with $\min(w)=j$ and $\max(w)=k+t(\ell-1)+1$. Now, every monomial $w\in A^t(k,\ell)$ with $\min(w)<i_1=\min(\widetilde{u})=\min(u)$ is greater than $u$ with respect to $>_{\slex}$. Therefore the first $i_1-1$ binomial coefficients in (\ref{eqcarnum2}) give a contribution to the computation of $m$.\medskip
		
We need to distinguish two cases: $\Gap_t(\widetilde{u})=\emptyset$, $\Gap_t(\widetilde{u})\ne\emptyset$.\\ \\
\textbf{Case 1.} Let $\Gap_t(\widetilde{u})=\emptyset$. In such a case, $u$ is the greatest monomial of $A^t(k,\ell)$ with $\min(u)=i_1$. Therefore, the following sum of binomial coefficients
\begin{equation}\label{eqcarnum3}
\sum_{j=1}^{i_1-1}\binom{k+\ell-1-j}{\ell-2}
\end{equation}
gives the number of all monomials $w\in A^t(k,\ell)$ greater than $u$. In fact, since $i_{j+1}-i_j=t$, for all $j=1,\dots,\ell-1$, then the other monomials greater than $u$ which are different from the $w$'s counted by (\ref{eqcarnum3}) do not exist. Hence,
\begin{align*}
m=\big|[x_1x_{1+t}\cdots x_{1+t(\ell-2)}x_{k+t(\ell-1)+1},u]\big|&=\sum_{j=1}^{i_1-1}\binom{k+\ell-1-j}{\ell-2}+1\\&=\sum_{j=1}^{i_1-1}\binom{k+\ell-1-j}{\ell-2}+\binom{0}{0}.
\end{align*}
Therefore, $m$ is the sum of $r=i_1-1+1=i_1=\min(\widetilde{u})=\min(u)$ binomial coefficients.\medskip\\
\textbf{Case 2.} Assume $\Gap_t(\widetilde{u})=\{g_1,\dots,g_p\}\ne\emptyset, p\ge1$.
Let $w\in A^t(k,\ell)$ such that $w=x_{j_1}x_{j_2}\cdots x_{j_\ell}$, $w>_{\slex}u$. Since $i_\ell=j_\ell$, then there exists an integer $s\in\{1,\dots,\ell-1\}$ such that
$$
j_1=i_1,\ \ \ j_2=i_2, \dots, j_{s-1}=i_{s-1},\ \ \ j_s<i_s.
$$
If $s=1$, then $w$ belongs to the set of monomials counted by (\ref{eqcarnum3}). Let $s>1$, then $s-1\in\Gap_t(\widetilde{u})$. Indeed,
$$
i_{s}-i_{s-1}-t>j_s-j_{s-1}-t=\wdt((s-1)\gap_t)(w)\ge0,
$$
and so $\wdt((s-1)\gap_t)(\widetilde{u})>0$. Therefore $s-1=g_h$ for some $h\in\{1,\dots,p\}$. Since, $j_s<i_s$, then $j_s\le i_s-1=i_{(g_h+1)}-1$ and $w=x_{i_1}x_{i_2}\cdots x_{i_{g_h}}z$, with $z$ a $t$--spread monomial of degree $\ell-g_h$, with
\begin{align*}
\max(z)&=k+t(\ell-1)+1,\\ \min(z)&=j_s\le i_s-1=i_{(g_h+1)}-1, 
\end{align*}
and, furthermore, $\min(z)\ge i_{g_h}+t$. Thus, $\min(z)$ belongs to the set
\small
\begin{equation}\label{eq:widegap}
\begin{aligned}
\big\{i_{g_h}+t,\dots,i_{(g_{h}+1)}-1\big\}&=\big\{i_{g_h}+t,\dots,i_{(g_{h}+1)}-i_{g_h}-t+i_{g_h}+t-1\big\}\\
&=\big\{i_{g_h}+t,\dots,i_{g_h}+t+\wdt(h\gap_t)(\widetilde{u})-1\big\}.
\end{aligned}
\end{equation}
\normalsize
		
It follows that $\min(z)$ may assume the $\wdt(h\gap_t)(\widetilde{u})$ possible values described in (\ref{eq:widegap}).\\
Now, let us consider the $i_1$--th binomial in (\ref{eqcarnum2}):
\begin{equation}\label{eqcarnum4}
\binom{k+\ell-1-i_1}{\ell-2}=\sum_{j=1}^{k-i_1+2}\binom{k+\ell-1-i_1-j}{\ell-3}.
\end{equation}
The $j$--th binomial coefficient of this sum counts the number of all monomials $w =x_{j_1}x_{j_2}\cdots x_{j_\ell}$ $\in A^t(k,\ell)$ such that $\min(w)=j_1=i_1$ and $j_2=i_2+t+j-1$. Hence, if $1\in\Gap_t(\widetilde{u})$, by the same arguments as before, the sum of the first $\wdt(1\gap_t)(\widetilde{u})$ in (\ref{eqcarnum4}) gives the number of monomials $w=x_{i_1}z\in A^t(k,\ell)$ greater than $u$. Thus, we consider the $\big(\wdt(1\gap_t)(\widetilde{u})+1\big)$--th binomial coefficient and we decompose it; if $2\in\Gap_t(\widetilde{u})$, then the sum of the first $\wdt(2\gap_t)(\widetilde{u})$ binomial coefficients of this new binomial decomposition gives the number of monomials $x_{i_1}x_{i_2}z\in A^t(k,\ell)$ greater than $u$.\\
Otherwise, if $1\notin\Gap_t(\widetilde{u})$, let us consider the first binomial in (\ref{eqcarnum4}):
$$
\binom{k+\ell-2-i_1}{\ell-3}=\sum_{j=1}^{k-i_1+2}\binom{k+\ell-2-i_1-j}{\ell-4}.
$$
The $j$--th binomial coefficient of this sum counts the number of all monomials  $w=x_{j_1}x_{j_2}\cdots x_{j_\ell}$ $\in A^t(k,\ell)$ with $\min(w)=j_1=i_1$, $j_2=i_2$ and $j_3=i_3+t+j-1$. If $2\notin\Gap_t(\widetilde{u})$, then we consider the first binomial of this sum and we decompose it. We continue to evaluate successive binomial decomposition until we reach a  gap $g_j\in\Gap_t(\widetilde{u})$, then the first $\wdt(g_j\gap_t)(\widetilde{u})$ binomial coefficients of the last evaluated binomial decomposition will give a contribute to the computation of $m$. This procedure can be iterated for all $g_j\in\Gap_t(\widetilde{u})$, $j\ge 3$.\\		
So, $|[x_1x_{1+t}\cdots x_{1+t(\ell-2)}x_{k+t(\ell-1)+1},u)|$ is the sum of
$$i_1-1+\sum_{j=1}^p\wdt(g_j\gap_t)(\widetilde{u})$$
suitable binomial coefficients. 
		
Finally, in order to get $m=|[x_1x_{1+t}\cdots x_{1+t(\ell-2)}x_{k+t(\ell-1)+1},u]|$,
we must take into account the binomial $\binom{0}{0}=1$ which counts the monomial $u$:
$$
r=i_1-1+\sum_{j=1}^p\wdt(g_j\gap_t)(\widetilde{u})+1=i_1+\sum_{j=1}^p\wdt(g_j\gap_t)(\widetilde{u}).
$$
The assertion follows.
\end{proof}\medskip
	
The next example illustrates Theorem \ref{teorcarnum1}.
\begin{Expl}\rm
Let $S=K[x_1,\dots,x_{16}]$, $t=3,(k,\ell)=(6,4)$, then $k+t(\ell-1)+1=16$. Let $u=x_4x_9x_{13}x_{16}\in A^3(6,4)$. The greatest monomial of $A^3(6,4)$, with respect to $>_{\slex}$, is $v=x_1x_4x_7x_{16}$. We want to evaluate $m=|[v,u]|$. 
		
From Lemma \ref{lemmacarnum1}, $|A^3(6,4)|=\binom{9}{3}=84$.\\
Using the notation as in Theorem \ref{teorcarnum1}, $\widetilde{u}=x_4x_9x_{13}$, $\Gap_3(\widetilde{u})=\{1,2\}$ and
\begin{align*}
i_1&=4,\\
\wdt(1\gap_3)(\widetilde{u})&=9-4-3=2,\\
\wdt(2\gap_3)(\widetilde{u})&=13-9-3=1.
\end{align*}
		
In order to compute $m=|[x_1x_4x_7x_{16},u]|$, we consider the following binomial decomposition (Proposition \ref{propcarnum1}):
\begin{equation}\label{eq1:bin}
\binom{9}{3}=\binom{8}{2}+\binom{7}{2}+\binom{6}{2}+\binom{5}{2}+\binom{4}{2}+\binom{3}{2}+\binom{2}{2}.
\end{equation}

Since $\min(u)=\min(\widetilde{u})=i_1=4$, then all monomials $w\in A^3(6,4)$ with $\min(w)\le i_1-1=3$ are greater than $u$. Hence, for the computation of $m$ we must take into account the sum of the first three binomial coefficients in (\ref{eq1:bin}), \emph{i.e.}, $\boxed{ \tbinom{8}{2}+\tbinom{7}{2}+\tbinom{6}{2}=64}$.
		
Now, let us consider the following binomial decompositition:
$$
\binom{5}{2}=\binom{4}{1}+\binom{3}{1}+\binom{2}{1}+\binom{1}{1}.
$$		
Since $\wdt(1\gap_3)(\widetilde{u})=2$, the sum $\boxed{\tbinom{4}{1}+\tbinom{3}{1}=7}$ gives the number of all monomials of type $x_{i_1}z=x_4z\in A^3(6,4)$, with $z$ a $t$--spread monomial of degree 3 such that $\max(z)=16$ and $\min(z)\in\{7,8\}$.\\
Up to this point we have got $\boxed{64+7=71}$ monomials.
	
The next decomposition we need to consider is the following one
$$
\binom{2}{1}=\binom{1}{0}+\binom{0}{0}.
$$	
Since $2\in\Gap_3(\widetilde{u})$ and $\wdt(2\gap_3)(\widetilde{u})=1$, we must take into account $\boxed{\tbinom{1}{0}=1}$ monomial of the type $x_{i_1}x_{i_2}z=x_4x_9z\in A^3(6,4)$ with $\max(z)=16$ and $\min(z)=12$. 
		
Finally, we have obtained $\boxed{71+1=72}$ monomials of $A^3(6,4)$ greater than $u$, and so $m=|[x_1x_4x_7x_{16},u]|=72+1=73$.
		
The following scheme summarizes the previous calculation.
\begin{align*}
\tbinom{9}{3}=\boxed{\bf{\tbinom{8}{2}+\tbinom{7}{2}+\tbinom{6}{2}}} + & \tbinom{5}{2}+\tbinom{4}{2}+\tbinom{3}{2}+\tbinom{2}{2}\\
& \tbinom{5}{2}=
\begin{aligned}[t]  \boxed{\bf{\tbinom{4}{1}+\tbinom{3}{1}}}+\tbinom{2}{1}&+\tbinom{1}{1} \\
\tbinom{2}{1}&=\boxed{\bf{\tbinom{1}{0}}}+\tbinom{0}{0}.
\end{aligned}
\end{align*}
		
		
Figure \ref{fig:carnumfig2} shows the list of all monomials which come into play to determine $[v,u]$:
\begin{figure}[H]
\scalebox{0.8}{
$\begin{aligned}
\left. \begin{array}{r}
x_1x_4x_7x_{16},x_1x_4x_8x_{16},x_1x_4x_9x_{16},x_1x_4x_{10}x_{16},x_1x_4x_{11}x_{16},x_1x_4x_{12}x_{16},x_1x_4x_{13}x_{16}\\
x_1x_5x_8x_{16},x_1x_5x_9x_{16},x_1x_5x_{10}x_{16},x_1x_5x_{11}x_{16},x_1x_5x_{12}x_{16},x_1x_5x_{13}x_{16}\\
x_1x_6x_9x_{16},x_1x_6x_{10}x_{16},x_1x_6x_{11}x_{16},x_1x_6x_{12}x_{16},x_1x_6x_{13}x_{16}\\
x_1x_7x_{10}x_{16},x_1x_7x_{11}x_{16},x_1x_7x_{12}x_{16},x_1x_7x_{13}x_{16}\\
x_1x_8x_{11}x_{16},x_1x_8x_{12}x_{16},x_1x_8x_{13}x_{16}\\
x_1x_9x_{12}x_{16},x_1x_9x_{13}x_{16}\\
x_1x_{10}x_{13}x_{16}\\
\end{array}\right\}&\bf{\tbinom{8}{2}}\\
\left.\begin{array}{r}
x_2x_5x_8x_{16},x_2x_5x_9x_{16},x_2x_5x_{10}x_{16},x_2x_5x_{11}x_{16},x_2x_5x_{12}x_{16},x_2x_5x_{13}x_{16}\\
x_2x_6x_9x_{16},x_2x_6x_{10}x_{16},x_2x_6x_{11}x_{16},x_2x_6x_{12}x_{16},x_2x_6x_{13}x_{16}\\
x_2x_7x_{10}x_{16},x_2x_7x_{11}x_{16},x_2x_7x_{12}x_{16},x_2x_7x_{13}x_{16}\\
x_2x_8x_{11}x_{16},x_2x_8x_{12}x_{16},x_2x_8x_{13}x_{16}\\
x_2x_9x_{12}x_{16},x_2x_9x_{13}x_{16}\\
x_2x_{10}x_{13}x_{16}\\
\end{array}\right\}&\bf{\tbinom{7}{2}}\\
\left.\begin{array}{r}
x_3x_6x_9x_{16},x_3x_6x_{10}x_{16},x_3x_6x_{11}x_{16},x_3x_6x_{12}x_{16},x_3x_6x_{13}x_{16}\\
x_3x_7x_{10}x_{16},x_3x_7x_{11}x_{16},x_3x_7x_{12}x_{16},x_3x_7x_{13}x_{16}\\
x_3x_8x_{11}x_{16},x_3x_8x_{12}x_{16},x_3x_8x_{13}x_{16}\\
x_3x_9x_{12}x_{16},x_3x_9x_{13}x_{16}\\
x_3x_{10}x_{13}x_{16}\\
\end{array}\right\}&\bf{\tbinom{6}{2}}\\
\left.\begin{array}{r}
x_4x_7x_{10}x_{16},x_4x_7x_{11}x_{16},x_4x_7x_{12}x_{16},x_4x_7x_{13}x_{16}\\
x_4x_8x_{11}x_{16},x_4x_8x_{12}x_{16},x_4x_8x_{13}x_{16}
\end{array}\right\}&\bf{\tbinom{4}{1}+\tbinom{3}{1}}\\
\left.\begin{array}{r}
x_4x_9x_{12}x_{16}
\end{array}\right\}&\bf{\tbinom{1}{0}}\\
\left.\begin{array}{r}
\bm{x_4x_9x_{13}x_{16}}
\end{array}\right\}&\bf{\tbinom{0}{0}}
\end{aligned}$
}
\caption{Monomials of $[v,u]$.}
\label{fig:carnumfig2}
\end{figure}
\end{Expl}
\medskip
	
\begin{Rem} \em 
Let $I$ be a $t$--spread strongly stable ideal of $S=K[x_1,\dots,x_n]$ generated in degree $\ell$, and let $k$ and $a$ two positive integers. Then, $I$ has extremal Betti number $\beta_{k,k+\ell}(I)=a$ if and only if $k+t(\ell-1)+1\le n$ and $a\le|A^t(k,\ell)|$. 
 
If $A^t(k,\ell)=\{u_1,\dots,u_q\}$, 
with $u_1,\dots,u_q$ as in (\ref{eqcarnumor}), and $u_j$ is the smallest monomial of $A^t(k,\ell)$ belonging to $G(I)_\ell$, then
\begin{align*}
a &\le |\{v\in A^t(k,\ell):v\ge_{\slex}u_j\}|\\
  &= |[x_1x_{1+t}\cdots x_{1+t(\ell-2)+1}x_{k+t(\ell-1)+1},u_j]|=m,
\end{align*}
and Theorem \ref{teorcarnum1} allows to determine $m$. Therefore, if the ideal $I$ is generated in one degree $\ell$, then the proposed question $(*)$ (see, the section Introduction) is solved.
\end{Rem}
	
If $I$ is a $t$--spread strongly stable ideal generated in several degrees and $\beta_{k,k+\ell}(I)=a$ is an extremal Betti number, the bound $a\le|A^t(k,\ell)|$ is not sufficient to solve the question $(*)$, as next example shows.
	
\begin{Expl}\label{carnumes1}\rm
Let $n=13$, $t=2$. We will show that a $t$--spread strongly stable ideal $I$ of $S=K[x_1,\dots,x_{13}]$ with corners $(k_1,\ell_1)=(5,2)$, $(k_2,\ell_2)=(3,4)$, $k_1>k_2,\ 2\le\ell_1<\ell_2, k_i+t(\ell_i-1)+1\le 13$ ($i=1,2$) and extremal Betti numbers $\beta_{k_1,k_1+\ell_1}(I)=a_1=3$ and $\beta_{k_2,k_2+\ell_2}(I)=a_2=10$ does not exist.
		
First of all, we write down the monomials of the sets $A^t(k_i,\ell_i)$, $i=1,2$:
\begin{align*}
A^2(5,2)=\big\{&x_1x_8, x_2x_8,x_3x_8, x_4x_8, x_5x_8, x_6x_8 \big\},\\
A^2(3,4)=\big\{&x_1x_3x_5x_{10}, x_1x_3x_6x_{10}, x_1x_3x_7x_{10}, x_1x_3x_8x_{10}, x_1x_4x_6x_{10},\\ 
&x_1x_4x_7x_{10}, x_1x_4x_8x_{10}, x_1x_5x_7x_{10}, x_1x_5x_8x_{10}, x_1x_6x_8x_{10},\\
&x_2x_4x_6x_{10}, x_2x_4x_7x_{10}, x_2x_4x_8x_{10}, x_2x_5x_7x_{10}, x_2x_5x_8x_{10},\\
&x_2x_6x_8x_{10}, x_3x_5x_7x_{10}, x_3x_5x_8x_{10}, x_3x_6x_8x_{10}, x_4x_6x_8x_{10}\big\}.
\end{align*}

Assume that the sets $A^t(k_i,\ell_i)$ are ordered by $>_\slex$.
		
We observe that for the $a_i$'s ($i=1,2$) the following upper bounds hold
\begin{align*}
a_1\le& |A^t(k_1,\ell_1)|=\binom{5+2-1}{2-1}=6, \\
a_2\le& |A^t(k_2,\ell_2)|=\binom{3+4-1}{4-1}=20.
\end{align*} 
		
From Corollary \ref{AC1 cor2},
$$
\beta_{k_1,k_1+\ell_1}(I)=\beta_{5,7}(I)=\Big|\Big\{ u\in G(I)_{2}:\max(u)=8 \Big\}\Big|.
$$
$G(I)_{\ell_1}= G(I)_2$ has to be a $2$--spread strongly stable set such that $|G(I)_2\cap A^2(5,2)|=a_1$ $=3$.		
Hence,
$G(I)_2\cap A^2(5,2)=\{x_1x_8,x_2x_8,x_3x_8\}$. Therefore we set,
$$
G(I)_2=B_2\{ x_1x_8,x_2x_8,x_3x_8\}.
$$
		
Now, we have to determine $G(I)_{\ell_2}=G(I)_4$ such that $\beta_{k_2,k_2+\ell_2}(I)=a_2=10$. So, we have to find $10$ monomials of $A^2(3,4)$ not belonging to $\Shad_2^{\ell_2-\ell_1}(G(I)_{\ell_1})$.  
Since $\min\Shad_2^{\ell_2-\ell_1}(G(I)_{\ell_1}) = x_3x_6x_8x_{10}$, then 
the only monomial of $A^t(k_2,\ell_2)=A^2(3,4)$ that may belong to $G(I)_{\ell_2}=G(I)_4$ is $x_4x_6x_8x_{10}$. Hence, the value of $\beta_{k_2,k_2+\ell_2}(I)=\beta_{3,7}(I)$ can be at most $1$. Thus, the value $a_{2}=10$ cannot be reached.

Finally, a $2$--spread strongly stable ideal $I$ of $S$ with extremal Betti numbers $\beta_{k_1,k_1+\ell_1}(I)=a_1=3$ and $\beta_{k_2,k_2+\ell_2}(I)=a_2=10$ does not exists.
\end{Expl}\medskip

We close the Section by the next definition that will be useful for solving our problem.
\begin{Def} \label{def:boshad}\rm 
Let $(k_1,\ell_1)$, $(k_2,\ell_2)$ be two pairs of positive integers such that $k_1>k_2,\ell_1<\ell_2$, and $k_i+t(\ell_i-1)+1\le n$, $i=1,2$. If $T=\{u_1,\dots,u_r\}$ is a subset of monomials of $A^t(k_1,\ell_1)$, we define the \textit{Borel $t$--shadow of $T=\{u_1,\dots,u_r\}$ with respect to the corner $(k_2,\ell_2)$} as follows\\
\[
\BShad_t(T)_{(k_2,\ell_2)}=\Big\{ v\in\Shad_t^{\ell_2-\ell_1}\big(B_t\{ T\}\big):\max(v)\le k_2+t(\ell_2-1)+1 \Big\}.
\]
\normalsize
\end{Def}
	
\begin{Rem}\label{carnumoss1}\rm
It is clear that $\BShad_t(u_1,\dots,u_r)_{(k_2,\ell_2)}$ is a $t$--spread strongly stable set. Moreover, one can easily verify that,  if $u_1>_\slex u_2>_\slex \ldots >_\slex u_r$, then $$\min\BShad_t(u_1,\dots,u_r)_{(k_2,\ell_2)}=\min\BShad_t(u_{r})_{(k_2,\ell_2)}.$$
\end{Rem}\medskip

\section{A numerical characterization}\label{sec4}
In this Section we give a numerical characterization of the extremal Betti numbers of a $t$--spread strongly stable ideal by generalizing the methods in \cite{AC1}.

In order to find optimal bounds for the values of the extremal Betti numbers of a $t$--spread strongly stable ideal, we need some comments.
\begin{Discus}\label{carnumdisc1}\rm
Let $t\ge1$ and $S=K[x_1,\dots,x_n]$. Let $(k_1,\ell_1),(k_2,\ell_2)$ be two pairs of positive integers with $k_1>k_2,\ 2\le\ell_1<\ell_2$ and $k_i+t(\ell_i-1)+1\le n$ ($i=1,2$), and let $a_1,a_2$ be two positive integers.
		
Now, let $J_2=[u_{2,1},u_{2,a_2}]$ be a segment of $A^t(k_2,\ell_2)$ of cardinality $a_2$. We want to find the admissible values of $a_1$ such that there is a segment $J_1=[u_{1,1},u_{1,a_1}]$ of $A^t(k_1,\ell_1)$ of cardinality $a_1$ with
$$
J_2\cap\BShad_t([u_{1,1},u_{1,a_1}])_{(k_2,\ell_2)}=\emptyset.
$$
		
First of all, $a_1\le|A^t(k_1,\ell_1)|=\binom{k_1+\ell_1-1}{\ell_1-1}$. The inequality must be strict,
otherwise $\BShad_t\big([u_{1,1},u_{1,a_1}]\big)_{(k_2,\ell_2)}=\BShad_t\big(A^t(k_1,\ell_1)\big)_{(k_2,\ell_2)}=A^t(k_2,\ell_2)$, and thus $J_2\cap\BShad_t([u_{1,1},u_{1,a_1}])_{(k_2,\ell_2)}$ $=J_2\ne\emptyset$.
		
Suppose $a_1$ is such that $J_2\cap\BShad_t([u_{1,1},u_{1,a_1}])_{(k_2,\ell_2)}=\emptyset$. Let
$$
v_1:=\min\big\{u\in A^t(k_1,\ell_1):\max J_2=u_{2,1}\notin\BShad_t(u) \big\}.
$$
Clearly, $u_{1,a_1}\ge_{\slex}v_1$. By Theorem \ref{teorcarnum1}, we can calculate the following cardinalities:
\small
\begin{align*}
n_1&:=\big|\big\{ u\in A^t(k_1,\ell_1):u\ge_\slex v_1 \big\}\big|=\big|[x_1x_{1+t}\cdots x_{1+t(\ell_1-2)}x_{k_1+t(\ell_1-1)+1},v_1]\big|,\\
p_1&:=\big|\big\{ v\in A^t(k_1,\ell_1):v>_\slex u_{1,1} \big\}\big|=\big|[x_1x_{1+t}\cdots x_{1+t(\ell_1-2)}x_{k_1+t(\ell_1-1)+1},u_{1,1})\big|.
\end{align*}
\normalsize
Since $[u_{1,1},u_{1,a_1}]\subseteq[x_1x_{1+t}\cdots x_{1+t(\ell_1-2)}x_{k_1+t(\ell_1-1)+1},v_1]$, then
$$
a_1\le n_1.
$$
We can improve such a bound for $a_1$. Indeed, $[u_{1,1},u_{1,a_1}]\subseteq[u_{1,1},v_1]$ and since
\small
\[[u_{1,1},v_1]=[x_1\cdots x_{1+t(\ell_1-2)}x_{k_1+t(\ell_1-1)+1},v_1]\setminus[x_1\cdots x_{1+t(\ell_1-2)}x_{k_1+t(\ell_1-1)+1},u_{1,1}),\]
\normalsize
then we have $|[u_{1,1},v_1]|=n_1-p_1$. Finally,
$$
a_1\le n_1-p_1.
$$
It is clear that if $u_{1,1}=x_1x_{1+t}\cdots x_{1+t(\ell_1-2)}x_{k_1+t(\ell_1-1)+1}=\max A^t(k_1,\ell_1)$, then $p_1=0$, and the bound $a_1\le n_1$ is an optimal bound.
\end{Discus}
	
We want to review Example \ref{carnumes1} \emph{via} Discussion \ref{carnumdisc1}.
\begin{Expl}\rm 
Let $n=13,\ t=2$, $(k_1,\ell_1)=(5,2)$, $(k_2,\ell_2)=(3,4)$, $k_1>k_2,\ 2\le\ell_1<\ell_2, k_i+t(\ell_i-1)+1\le 13$, $i=1,2$, and $a_1\ge 1$, $a_2=10$. 
Let $J_2=[x_2x_4x_6x_{10},x_4x_6x_8x_{10}]$ be a segment of $A^t(k_2,\ell_2)$ of cardinality $a_2=10$. We consider this segment since its monomials are the smallest of $A^t(k_2,\ell_2)$ with respect to $>_{\slex}$. Such a choice gives us the best chance to construct a $t$--spread strongly stable ideal of $S=K[x_1,\dots,x_{13}]$ with extremal Betti numbers $\beta_{k_1,k_1+\ell_1}(I)=a_1$ and $\beta_{k_2,k_2+\ell_2}(I)=a_2=10$.
		
Let us determine the admissible values of $a_1$. Let
\begin{align*}
v_1&=\min\big\{u\in A^t(k_1,\ell_1):\max J_2=u_{2,1}\notin\BShad_t(u) \big\}\\
&=\min\big\{u\in A^2(5,2):x_2x_4x_6x_{10}\notin\BShad_t(u) \big\}\\
&=x_1x_8.
\end{align*}
Hence, $n_1=|[x_1x_8,x_1x_8]|=1$. Let $J_1=[u_{1,1},u_{1,a_1}]$ be a segment of $A^t(k_1,\ell_1)$ of cardinality $a_1$. If $u_{1,1}=x_1x_8$, then $p_1=0$, and $a_1\le n_1-p_1=1$. In such a case $a_1=1$, and we can construct the ideal $I$ as follows:
\begin{align*}
G(I)_{\ell_1}=G(I)_2&=B_2\{ x_1x_8\},\\
G(I)_{\ell_2}=G(I)_4&=B_2\{ [x_2x_4x_6x_{10},x_4x_6x_8x_{10}]\}\setminus\Shad_2^2([I_2]_2).
\end{align*}
The ideal $I$ has two extremal Betti numbers, $\beta_{k_1,k_1+\ell_1}(I)=\beta_{5,7}(I)=1=a_1$ and $\beta_{k_2,k_2+\ell_2}(I)=\beta_{3,7}(I)=10=a_2$.
		
If $u_{1,1}<_\slex x_1x_8$, then $p_1\ge 1$, and $a_1\le n_1-p_1\le 0$ and in such a case the ideal $I$ does not exist. 
One can immediately observe that a $t$--spread strongly stable ideal $I$ of $S$ with extremal Betti numbers $\beta_{k_1,k_1+\ell_1}(I)=a_1=3$ and $\beta_{k_2,k_2+\ell_2}(I)=a_2=10$ does not exist, since $a_1$ can be at most $1$.
\end{Expl}

Discussion \ref{carnumdisc1} will be usefull for the aim of this article.

%
%
%

%

Recall that the floor function of a real number $x$ is defined as follows:
$$
\lfloor x\rfloor=\max\{n\in\ZZ:n\le x\}.
$$
In particular, if $-1\le x<0$ then $\lfloor x\rfloor=-1$ and if $0\le x<1$ then $\lfloor x\rfloor=0$.

We quote the next result from \cite{AFC}. Such a result gives the maximal number of corners allowed for a $t$--spread strongly stable ideal. 
\begin{Thm}\label{maxcornteor} (\cite[Theorem 4.1, Theorem 5.1]{AFC})
Let $n,t, k$ be three positive integers such that $n,t\ge 2$ and $k\ge 3$. Assume
\[n=d+kt, \quad 1\le d\le t.\]
Then, for $k\ge 3$, every $t$--spread strongly stable ideal $I$ of $S=K[x_1,\dots,x_n]$ of initial degree $\ell_1$, $2\le\ell_1\le k+\left\lfloor\frac{d-2}{t}\right\rfloor+1$, and with a corner in degree $\ell_1$ can have at most
$$
\begin{array}{ll}
k+\left\lfloor\frac{d-3}{t}\right\rfloor \, \text{corners}, &\text{if} \ \ell_1=2,\\\\
k+\left\lfloor\frac{d-2}{t}\right\rfloor-(\ell_1-2)\, \text{corners}, &\text{if}\ \ell_1\ge 3.
\end{array}
$$
\end{Thm}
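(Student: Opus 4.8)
The plan is to establish the two bounds separately, treating the cases $\ell_1=2$ and $\ell_1\ge 3$ in parallel but with different bookkeeping, and in each case to exhibit an ideal attaining the claimed maximum so that the bound is sharp. The fundamental constraint is Theorem~\ref{betti teor}: if $(k_i,\ell_i)$ are the corners with $k_1>k_2>\cdots$ and $\ell_1<\ell_2<\cdots$, then for each $i$ we must have $k_i+t(\ell_i-1)+1\le n$, and moreover the corner in degree $\ell_1$ forces $k_1+t(\ell_1-1)+1 = \max\{\max(u):u\in G(I)_{\ell_1}\}\le n$. So the first step is to translate the whole problem into the combinatorial question: how many pairs $(k_i,\ell_i)$ with strictly decreasing $k_i$, strictly increasing $\ell_i$, $\ell_i\ge\ell_1$, and $k_i\le n-t(\ell_i-1)-1$ can coexist? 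Since $k_i\ge 1$ always, a corner in degree $\ell$ can exist only when $n-t(\ell-1)-1\ge 1$, i.e. $\ell\le (n-2)/t + 1$. Writing $n=d+kt$ with $1\le d\le t$, this reads $\ell \le k + \lfloor (d-2)/t\rfloor + 1$, which is exactly the hypothesis imposed on $\ell_1$ and simultaneously the largest degree in which any corner can sit.

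Next I would count, for a fixed initial degree $\ell_1$, the number of degrees $\ell$ with $\ell_1\le\ell\le k+\lfloor(d-2)/t\rfloor+1$, since at most one corner lives in each degree; this count is $k+\lfloor(d-2)/t\rfloor+1-\ell_1+1 = k+\lfloor(d-2)/t\rfloor-(\ell_1-2)$. This already gives the stated bound for $\ell_1\ge 3$. For $\ell_1=2$, the naive count gives $k+\lfloor(d-2)/t\rfloor$, but the claimed bound is $k+\lfloor(d-3)/t\rfloor$, which is smaller by one precisely when $t\mid(d-2)$ (equivalently $d=2$ or, degenerately, $d=t+2$—but $d\le t$ so only $d=2$, and also $d=1$ where $\lfloor(d-2)/t\rfloor=-1=\lfloor(d-3)/t\rfloor$ coincide when $t>1$... one must be careful with $t=2$). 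So the extra work in the $\ell_1=2$ case is to show that when a corner already sits in degree $2$ and the degrees run up to the top, one of the extreme degrees cannot actually carry a corner — this is the kind of ``boundary loss'' argument where the monomial realizing the top-degree corner would be forced to have $\min$ too small to be distinct from the shadow of the degree-$2$ generators, in the spirit of Example~\ref{carnumes1} and Discussion~\ref{carnumdisc1}. I expect this to be the subtle point: the purely numerical count overcounts by one in degree $2$, and ruling out the top corner (or showing the $A^t(k_1,\ell_1)$ and shadow constraints collide) requires the $\BShad_t$ machinery from Section~\ref{sec3}.

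Finally, for sharpness I would construct explicit $t$--spread strongly stable ideals achieving the bound: start from a $t$--spread lex segment in $A^t(k_1,\ell_1)$ of size $1$ chosen so its iterated $t$--shadows are as small as possible (take the smallest monomial of $A^t(k_1,\ell_1)$), then in each successive admissible degree $\ell$ append the single smallest monomial of $A^t(k-(\ell-2)t',\ell)$ not already in the shadow, guaranteeing by Corollary~\ref{AC1 cor2} and Theorem~\ref{betti teor} that each such degree contributes a genuine corner. Verifying that these shadows do not ``overflow'' past the available monomials in the next degree — i.e. that $\min\Shad_t^{\,\ell-\ell_1}(G(I)_{\ell_1})$ still leaves room in $A^t(k',\ell)$ — is a direct computation with $\min$ of a shadow of a single monomial, and is where the hypotheses $n,t\ge 2$, $k\ge 3$, and the precise range of $\ell_1$ are used. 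The main obstacle, as noted, is the off-by-one correction in the $\ell_1=2$ case; everything else is careful but routine bookkeeping with floors and the cardinality formula of Lemma~\ref{lemmacarnum1}. $\Box$
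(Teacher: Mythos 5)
The paper does not prove this statement: it is quoted verbatim from~\cite{AFC} (Theorems 4.1 and 5.1 there), so there is no proof in the present source to compare against. I can therefore only assess your sketch on its own merits.

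Your degree-counting step is sound and does establish the upper bound when $\ell_1\ge 3$: each corner occupies a distinct degree $\ell$ with $\ell_1\le\ell\le k+\lfloor(d-2)/t\rfloor+1$ (using $k_i\ge 1$), and the number of such degrees is exactly $k+\lfloor(d-2)/t\rfloor-(\ell_1-2)$. You also correctly observe that for $\ell_1=2$ the naive count gives $k+\lfloor(d-2)/t\rfloor$ whereas the claim is $k+\lfloor(d-3)/t\rfloor$, and that these differ (by exactly one) precisely when $d=2$. But this is where you stop: you \emph{name} the obstruction (the $\BShad_t$ of the degree-two generators swallowing the monomials that would be needed to realize a corner in the top degree) without actually proving that it forces the loss of a corner. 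Since the two-formula structure of the statement exists exactly because of this $\ell_1=2$, $d=2$ phenomenon — this is why \cite{AFC} devotes two separate theorems to it — the argument as written does not establish the theorem in the only case where the degree count is insufficient. Concretely: with $\ell_1=2$ and $d=2$ you have shown an upper bound of $k$ corners, but the claim is $k-1$; you must show that a degree-$2$ corner together with corners in every subsequent admissible degree up to $k+1$ leads to a contradiction (for instance by tracking $\min\BShad_t$ of the degree-$2$ Borel set up through the degrees and showing it eventually exhausts $A^t(k_r,\ell_r)$). Without that computation, the bound in the $\ell_1=2$ branch is not proved.

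A secondary, smaller gap: you repeatedly rely on $k_i\ge 1$ to cap the admissible degrees. This is consistent with the paper's definition of the corner sequence, but it is worth noting that you would need a one-line argument (as you can extract from the formula $\beta_{k,k+\ell}(I)=\sum_{u\in G(I)_\ell}\binom{\max(u)-t(\ell-1)-1}{k}$ and the fact that $x_1x_{1+t}\cdots x_{1+t(\ell-1)}$ always lies in the iterated shadow of lower-degree generators) to rule out a corner with $k=0$ sitting above the initial degree, lest the range of admissible degrees be one larger than you assumed. Finally, the sharpness construction you gesture at is not part of the ``at most'' claim, so its omission is not a gap for this particular statement, but the $\ell_1=2$, $d=2$ case is, and it must be filled.
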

\vspace{0,5cm}

Now we are in position to prove the main result in the paper.

\begin{Thm}\label{carnummainteor}
Let $n,\ell_1,r$ three positive integer, such that $n=d+kt$, $t\ge1$, $1\le d\le t$, $k\ge 3$, $2\le\ell_1\le k+\left\lfloor\frac{d-2}{t}\right\rfloor+1$, and $r$ an integer such that 
$$
\begin{array}{ll}
1\le r\le k+\left\lfloor\frac{d-3}{t}\right\rfloor,&\text{if}\ \ell_1=2,\\
1\le r\le k+\left\lfloor\frac{d-2}{t}\right\rfloor-(\ell_1-2),&\text{if}\ \ell_1\ge 3.
\end{array}
$$
Let $(k_1,\ell_1),\dots,(k_r,\ell_r)$ be $r$ pairs of positive integers such that $n-t-1\ge k_1>k_2>\ldots>k_r\ge 0$, $k_i+t(\ell_i-1)+1\le n$, for all $i=1,\dots,r$,
$$
\begin{array}{ll}
2=\ell_1<\ell_2<\ldots<\ell_r\le k+\left\lfloor\frac{d-3}{t}\right\rfloor+1,&\text{if}\ \ell_1=2,\\\\
3\le\ell_1<\ell_2<\ldots<\ell_r\le k+\left\lfloor\frac{d-2}{t}\right\rfloor+1,&\text{if}\ \ell_1\ge 3,
\end{array}
$$
and let $a_1,\dots,a_r$ be $r$ positive integers. 
The following conditions are equivalent:
\begin{enumerate}
\item[\em(1)] There exists a $t$--spread strongly stable ideal $I$ of $S=K[x_1,\dots,x_n]$ with
$$
\beta_{k_1,k_1+\ell_1}(I)=a_1,\dots,\beta_{k_r,k_r+\ell_r}(I)=a_r
$$
as extremal Betti numbers.
\item[\em(2)]  Setting
\begin{enumerate}
\item[\em(i)] $v_r=x_{k_r+1}x_{k_r+t+1}\cdots x_{k_r+t(\ell_r-1)+1}$ 
and $A_r=[w_r,v_r]$, with $w_r\in A^t(k_r,\ell_r)$ such that $|A_r|=a_r$;
\item[\em(ii)] \small $v_{r-i}=\min\big\{u\in A^t(k_{r-i},\ell_{r-i}):\max A_{r-i+1}\notin\BShad_t(u)_{(k_{r-i+1},\ell_{r-i+1})}\big\}$\normalsize,\\
$A_{r-i}=[w_{r-i},v_{r-i}]$, with $w_{r-i}\in A^t(k_{r-i},\ell_{r-i})$ such that $|A_{r-i}|=a_{r-i}$;
\item[\em(iii)] for $i=1,\dots,r,$ $n_i=\big|\big\{u\in A^t(k_i,\ell_i):u\ge v_i\big\}\big|$, then the integers $a_i$ satisfy the following bound:
$$
a_i\le n_i.
$$
Moreover, if $a_i=|[u_{i,1},u_{i,a_i}]|$, with $u_{i,j}\in A^t(k_i,\ell_i)$ for all $i=1,\dots,r$, and all $j=1,\dots,a_i$, setting $p_i=\big|\big\{v\in A^t(k_i,\ell_i):v>u_{i,1}\big\}\big|$, then $a_i$ satisfy the bound $a_i\le n_i-p_i$, for all $i=1,\dots,r$.
\end{enumerate}
\end{enumerate}
\end{Thm}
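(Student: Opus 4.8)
The plan is to prove the equivalence by showing that the construction in (2) produces, whenever the numerical bounds $a_i\le n_i-p_i$ hold, an honest $t$--spread strongly stable ideal realizing the prescribed extremal Betti numbers, and conversely that any ideal with these extremal Betti numbers forces the bounds. I would set up the ideal degree by degree, defining $G(I)_{\ell_i}$ so that it meets $A^t(k_i,\ell_i)$ in exactly the segment $[u_{i,1},u_{i,a_i}]$ of cardinality $a_i$ and nothing larger than $k_i+t(\ell_i-1)+1$ in degree $\ell_i$; concretely
$$
G(I)_{\ell_r}=B_t\{[u_{r,1},u_{r,a_r}]\},\qquad
G(I)_{\ell_{r-i}}=B_t\{[u_{r-i,1},u_{r-i,a_{r-i}}]\}\setminus\Shad_t^{\ell_{r-i+1}-\ell_{r-i}}\big([I_{\ell_{r-i+1}}]_t\big),
$$
descending from $i=0$ upward, exactly mirroring the inductive definition of the $v_j$ and $A_j$ in (2).

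The forward direction (1)$\Rightarrow$(2) is the softer half: given an ideal $I$ with the stated extremal Betti numbers, Theorem~\ref{betti teor} and Corollary~\ref{AC1 cor2} tell us that $\beta_{k_i,k_i+\ell_i}(I)=|\{u\in G(I)_{\ell_i}:\max(u)=k_i+t(\ell_i-1)+1\}|$, and since this set sits inside $A^t(k_i,\ell_i)$ and, by strong stability, is ``downward closed'' in $>_{\slex}$ within $A^t(k_i,\ell_i)$, it is a segment $[u_{i,1},u_{i,a_i}]$. Then I would argue that the extremality condition in degree $\ell_{i+1}$ forces the generators contributing to $\beta_{k_{i+1},k_{i+1}+\ell_{i+1}}$ to avoid the shadow $\BShad_t$ of the degree-$\ell_i$ generators; this is where the definition of $v_{i}$ as the $\slex$-minimum of monomials whose Borel $t$-shadow misses $\max A_{i+1}$ enters, giving $u_{i,a_i}\ge_{\slex}v_i$, hence $a_i=|[u_{i,1},u_{i,a_i}]|\le|[x_1\cdots x_{1+t(\ell_i-2)}x_{k_i+t(\ell_i-1)+1},v_i]|-p_i=n_i-p_i$, using Theorem~\ref{teorcarnum1} to make $n_i$ and $p_i$ effectively computable.

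For the converse (2)$\Rightarrow$(1) I would verify, by descending induction on the degree, that the sets $G(I)_{\ell_i}$ defined above are nonempty $t$-spread strongly stable sets (using Remark~\ref{carnumoss1} that $\BShad_t$ of a set is $t$-spread strongly stable, and that removing a shadow from a Borel set leaves a Borel set in that degree), that the resulting ideal $I=(G(I)_{\ell_1}\cup\cdots\cup G(I)_{\ell_r})$ has $G(I)$ exactly this union (no redundant generators), and that the monomials of $A^t(k_i,\ell_i)$ surviving in $G(I)_{\ell_i}$ are precisely $[u_{i,1},u_{i,a_i}]$ — the bound $a_i\le n_i-p_i$ is exactly what guarantees that the segment $[u_{i,1},u_{i,a_i}]$ lies $\slex$-above $v_i$ and therefore escapes $\Shad_t^{\ell_{i+1}-\ell_i}([I_{\ell_{i+1}}]_t)$, so none of its $a_i$ monomials gets cancelled. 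Finally apply Theorem~\ref{betti teor} to confirm that $\beta_{k_i,k_i+\ell_i}(I)$ is extremal with value $a_i$ and that no other extremal Betti numbers appear, the last point using the upper bounds on $\ell_r$ and on $r$ inherited from Theorem~\ref{maxcornteor} to ensure the degrees $\ell_1<\cdots<\ell_r$ all fit inside $S=K[x_1,\dots,x_n]$.

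The main obstacle I anticipate is the bookkeeping in the converse direction: one must show simultaneously that passing to the Borel closure in degree $\ell_i$ and then subtracting the higher shadow does not accidentally destroy any of the chosen $A^t(k_i,\ell_i)$-monomials nor create a generator with $\max$ larger than $k_i+t(\ell_i-1)+1$ in its degree. This is controlled by Remark~\ref{carnumoss1} — $\min\BShad_t(u_{i,1},\dots,u_{i,a_i})_{(k_{i+1},\ell_{i+1})}=\min\BShad_t(u_{i,a_i})_{(k_{i+1},\ell_{i+1})}$ — so only the smallest chosen monomial in each degree needs to be tracked, and the choice of $v_i$ is precisely what makes $u_{i,a_i}$ land on the safe side of this threshold. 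Making the interplay between ``$\slex$-segment'', ``Borel closure'', and ``shadow removal'' precise, and checking that the resulting $G(I)$ is genuinely minimal, is the delicate part; everything else reduces to the cardinality formula of Theorem~\ref{teorcarnum1} and the extremality criterion of Theorem~\ref{betti teor}.
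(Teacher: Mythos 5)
Your overall strategy matches the paper's: build the ideal degree by degree, control the generators that contribute to each extremal Betti number via segments of $A^t(k_i,\ell_i)$, use Remark~\ref{carnumoss1} to reduce the shadow--avoidance check to the single monomial $u_{i,a_i}$, and invoke Theorem~\ref{teorcarnum1} to make the bounds $n_i$ and $p_i$ effective. The forward direction sketch, appealing to Corollary~\ref{AC1 cor2}, Theorem~\ref{betti teor}, and the analysis of Discussion~\ref{carnumdisc1}, is in the right spirit.

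However, there is a genuine error in the construction you display for (2)$\Rightarrow$(1): you build the ideal from the \emph{top} degree $\ell_r$ downward, setting
\[
G(I)_{\ell_{r-i}}=B_t\{[u_{r-i,1},u_{r-i,a_{r-i}}]\}\setminus\Shad_t^{\ell_{r-i+1}-\ell_{r-i}}\big([I_{\ell_{r-i+1}}]_t\big),
\]
but the set being subtracted lives in degree $\ell_{r-i+1}+(\ell_{r-i+1}-\ell_{r-i})$, which is strictly larger than $\ell_{r-i}$, so the difference is vacuous; the formula is degree-inconsistent. The recursion in condition~(2) for the $v_j$'s does run from $j=r$ down to $j=1$, but that is only a bookkeeping device for identifying the thresholds $v_i$; the \emph{construction} of $I$ must go bottom-up, from $\ell_1$ to $\ell_r$, as in the paper:
\[
G(I)_{\ell_i}:=B_t\{u_{i,1},\dots,u_{i,a_i}\}\setminus\Shad_t^{\ell_i-\ell_{i-1}}\big([I_{\ell_{i-1}}]_t\big),\qquad i\ge 2,
\]
where $\Shad_t$ raises degree $\ell_{i-1}$ to the current degree $\ell_i$. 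This order is not optional: $\Shad_t$ only increases degree, so redundancy of a degree-$\ell_i$ generator can only come from lower-degree generators, and one cannot know which monomials of $A^t(k_i,\ell_i)$ are absorbed by the shadow before the lower degrees are fixed. Your final paragraph (``escapes $\Shad_t^{\ell_{i+1}-\ell_i}([I_{\ell_{i+1}}]_t)$'') carries the same index inversion. If you reverse the order of construction and correct the exponents so that each subtracted shadow has the same degree as the Borel set it is removed from, the rest of the argument --- the role of $v_i$, the inequality $a_i\le n_i-p_i$ forcing $u_{i,a_i}\ge_{\slex}v_i$, and Remark~\ref{carnumoss1} --- goes through exactly as you sketch and coincides with the paper's proof.

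A minor further caveat on (1)$\Rightarrow$(2): the set $\{u\in G(I)_{\ell_i}:\max(u)=k_i+t(\ell_i-1)+1\}$ need not literally be a $\slex$-segment of $A^t(k_i,\ell_i)$ for an arbitrary $t$-spread strongly stable $I$ (some $\slex$-larger monomials of $A^t(k_i,\ell_i)$ may lie in $I$ but not in $G(I)$, being shadows of lower-degree generators). What Discussion~\ref{carnumdisc1} actually provides is the numerical comparison $a_i\le n_i-p_i$ via the threshold $v_i$ without assuming this set is a segment; so the phrase ``it is a segment'' should be softened to the inequality on cardinalities, which is what is needed.
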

\begin{proof}
(1) $\Longrightarrow$ (2). It follows by applying iteratively Discussion \ref{carnumdisc1}.\\
(2) $\Longrightarrow$ (1). We construct a $t$--spread strongly stable ideal $I$ of $S=K[x_1,\dots,x_n]$ generated in degrees $\ell_1,\dots,\ell_r$, with
$$
\beta_{k_1,k_1+\ell_1}(I)=a_1,\dots,\beta_{k_r,k_r+\ell_r}(I)=a_r
$$
as extremal Betti numbers.
		
First of all, we set
$$
G(I)_{\ell_1}:=B_t\{ u_{1,1},\dots,u_{1,a_1}\},
$$
where $u_{1,1}>_\slex\dots>_\slex u_{1,a_1}$ are the first $a_1$ monomials of $A^t(k_1,\ell_1)$ with respect to the squarefree lexicographic order. The existence of such monomials is guaranteed by the hypothesis $a_1\le n_1\le|A^t(k_1,\ell_1)|$. We observe that $[I_{\ell_1}]_t=G(I)_{\ell_1}$ is a $t$--spread strongly stable set, and by Corollary \ref{AC1 cor2} 
\small
$$
\beta_{k_1,k_1+\ell_1}(I)=\Big|\Big\{ u\in G(I)_{\ell_1}:\max(u)=k_1+t(\ell_1-1)+1 \Big\}\Big|=\big|[u_{1,1},u_{1,a_{1}}]\big|=a_1.
$$
\normalsize

Now, we construct $G(I)_{\ell_2}$. Setting $p_1=\big|\big\{v\in A^t(k_1,\ell_1):v>u_{1,1}\big\}\big|$, then $p_1=0$ because $u_{1,1}=x_1x_{1+t}\cdots x_{1+t(\ell-2)}x_{k+t(\ell-1)+1} = \max A^t(k_1,\ell_1)$.
By hypothesis (iii), $a_1\le n_1-p_1=n_1$, i.e.
\begin{align*}
\big|[u_{1,1},u_{1,a_1}]\big|&\le\big|[x_1x_{1+t}\cdots x_{1+t(\ell-2)}x_{k+t(\ell-1)+1},v_1]\big|\\&=\big|[u_{1,1},v_1]\big|.
\end{align*}
It follows that $u_{1,a_1}\ge_{\slex}v_1=\min\big\{v\in A^t(k_1,\ell_1):\max A_2\notin\BShad_t(v)_{(k_2,\ell_2)} \big\}$. Thus, $\max A_2\notin\BShad_t(u_{1,a_1})_{(k_2,\ell_2)}$, and $\min\BShad_t(u_{1,a_1})_{(k_2,\ell_2)}>_\slex \max A_2$. By Remark \ref{carnumoss1},
$$
\min\BShad_t\big(B_t\{[u_{1,1},u_{1,a_1}]\}\big)_{(k_2,\ell_2)}=\min\BShad_t\big(B_t\{ u_{1,a_1}\}\big)_{(k_2,\ell_2)}.
$$
Moreover, since $[I_{\ell_1}]_t=B_t\{[u_{1,1},u_{1,a_1}]\}$, we have
$$
\min\BShad_t\big(B_t\{ [u_{1,1},u_{1,a_1}]\}\big)_{(k_2,\ell_2)}=\min\BShad_t\big([I_{\ell_1}]_t\big)_{(k_2,\ell_2)}>_\slex \max A_2.
$$
Then, $A_2\cap \BShad_t([I_{\ell_1}]_t)_{(k_2,\ell_2)}=\emptyset$ and so we can find at least $a_2$ monomials of $A^t(k_2,\ell_2)$ not belonging to $\BShad_t([I_{\ell_1}]_t)_{(k_2,\ell_2)}$. Let $u_{2,1},\dots,u_{2,a_2}$ be the first $t$--spread monomials of $A^t(k_2,\ell_2)$ not belonging to $ \BShad_t([I_{\ell_1}]_t)_{(k_2,\ell_2)}$, with respect to $>_\slex$ . We set
$$
G(I)_{\ell_2}:=B_t\{ u_{2,1},\dots,u_{2,a_2}\}\setminus \Shad_t^{\ell_2-\ell_1}([I_{\ell_1}]_t).
$$
By our assumptions, from Corollary \ref{AC1 cor2}, we have
\small
$$
\beta_{k_2,k_2+\ell_2}(I)=\Big|\Big\{ u\in G(I)_{\ell_2}:\max(u)=k_2+t(\ell_2-1)+1 \Big\}\Big|=\big|[u_{2,1},u_{2,a_{2}}]\big|=a_2.
$$
\normalsize
One can note that $[I_{\ell_2}]_t$ is a $t$--spread strongly stable set. In fact,
$$
[I_{\ell_2}]_t=B_t\{ u_{2,1},\dots,u_{2,a_2}\}\cup\Shad_t^{\ell_2-\ell_1}([I_{\ell_1}]_t),
$$
and the union of two $t$--spread strongly stable sets is still a $t$--spread strongly stable set.
		
Now, we construct $G(I)_{\ell_3}$.	
Let $a_2=|[u_{2,1},u_{2,a_2}]|$ and let $p_2=|\{p\in A^t(k_2,\ell_2):v>_\slex u_{2,1}\}|$. By hypothesis (iii), $a_2\le n_2-p_2$. Hence,
\begin{align*}
\big|[u_{2,1},u_{2,a_2}]\big|&\le\big|[x_1x_{1+t}\cdots x_{1+t(\ell_2-2)+1}x_{k_2+t(\ell_2-1)+1},v_2]\big|\\&-\big|[x_1x_{1+t}\cdots x_{1+t(\ell_2-2)+1}x_{k_2+t(\ell_2-1)+1},u_{2,1})\big|\\
	&=\big|[u_{2,1},v_2]\big|.
\end{align*}	
It follows that \small $u_{2,a_2}\ge_{\slex} v_2=\min\big\{v\in A^t(k_2,\ell_2):\max A_3\notin\BShad_t(v)_{(k_3,\ell_3)} \big\}$\normalsize, and thus $\max A_3\notin\BShad_t(u_{2,a_2})_{(k_3,\ell_3)}$ and $\min\BShad_t(u_{2,a_2})_{(k_3,\ell_3)}>_\slex \max A_3$. By Remark \ref{carnumoss1},
$$
\min\BShad_t\big(B_t([u_{2,1},u_{2,a_2}])\big)_{(k_3,\ell_3)}=\min\BShad_t(B_t(u_{2,a_2}))_{(k_3,\ell_3)}
$$
and, by construction, we have
$$
\min\BShad_t\big([I_{\ell_2}]_t\big)_{(k_3,\ell_3)}\ge_{\slex}\min\BShad_t\big(B_t([u_{2,1},u_{2,a_2}])\big)_{(k_3,\ell_3)}>_\slex \max A_3.
$$
		
Therefore, $A_3\cap \BShad_t([I_{\ell_2}]_t)_{(k_3,\ell_3)}=\emptyset$ and we can find at least $a_3$ monomials of $A^t(k_3,\ell_3)$ not belonging to $\BShad_t([I_{\ell_2}]_t)_{(k_3,\ell_3)}$. Let $u_{3,1},\dots,u_{3,a_3}$ be the first $t$--spread monomials of $A^t(k_3,\ell_3)$ not belonging to $ \BShad_t([I_{\ell_2}]_t)_{(k_3,\ell_3)}$, with respect to $>_\slex$. 
		
Setting
$$
G(I)_{\ell_3}:=B_t\{ u_{3,1},\dots,u_{3,a_3}\}\setminus \Shad_t^{\ell_3-\ell_2}([I_{\ell_2}]_t),
$$
we have
$$
\beta_{k_3,k_3+\ell_3}(I)=\Big|\Big\{ u\in G(I)_{\ell_3}:\max(u)=k_3+t(\ell_3-1)+1 \Big\}\Big|=\big|[u_{3,1},u_{3,a_{3}}]\big|=a_3.
$$
		
Furthermore, $[I_{\ell_3}]_t$ is $t$--spread strongly stable set. Indeed,
$$
[I_{\ell_3}]_t=B_t\{ u_{3,1},\dots,u_{3,a_3}\}\cup\Shad_t^{\ell_3-\ell_2}([I_{\ell_2}]_t).
$$
		
Clearly, we can iterate this construction, setting
$$
G(I)_{\ell_i}:=B_t\{ u_{i,1},\dots,u_{i,a_i}\}\setminus \Shad_t^{\ell_i-\ell_{i-1}}([I_{\ell_{i-1}}]_t), 
$$
for $i=4, \ldots, r$.
At the end, we will obtain a $t$--spread strongly stable ideal $I$ of $S=K[x_1,\dots,x_n]$ generated in degrees $\ell_1,\dots,\ell_r$ with $\beta_{k_1,k_1+\ell_1}(I)=a_1,\dots,\beta_{k_r,k_r+\ell_r}(I)=a_r$ as extremal Betti numbers.
\end{proof}
\vspace{0,3cm}

We close this Section with a method for computing the minimum of a Borel $t$--shadow. The determination of such a monomial has a key role in the proof of Theorem \ref{carnummainteor}.
%
	
Firstly, we consider a simple case. 
\begin{Lem}\label{lemmacarnum2}
Let $(k_1,\ell_1),(k_2,\ell_2)$ be a pair of positive integers, with $k_1>k_2$, $\ell_1\ge2$, $\ell_2 = \ell_1+1$ and $k_i+t(\ell_i-1)+1\le n$, $i=1,2$. If $u=x_{i_1}x_{i_2}\dots x_{i_{\ell_1}}$ is a monomial of $A^t(k_1,\ell_1)$, then
$$
\min\BShad_t(u)_{(k_2,\ell_1+1)}=x_{i_1}\cdots x_{i_{(\ell_1-2-m)}}\bigg(\prod_{\nu=\ell_1-m-1}^{\ell_1+1}x_{k_2+t(\nu-1)+1}\bigg),
$$
with
\small
$$m=\min\Big\{j\in\ZZ_{\geq -1}:(k_2+t(\ell_2-1)+1-i_{(\ell_1-1)})+\sum_{s=0}^j\wdt\big((\ell_1-2-s)\gap_t\big)(u)\ge t\Big\}.$$
\normalsize
\end{Lem}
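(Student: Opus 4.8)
The plan is to compute $\min\BShad_t(u)_{(k_2,\ell_1+1)}$ by greedily maximising its variable indices one position at a time, in the spirit of the squarefree case treated in \cite{AC1}. Two observations set this up. First, $\BShad_t(u)_{(k_2,\ell_1+1)}$ is a $t$--spread strongly stable set (Remark \ref{carnumoss1}), so its $>_{\slex}$--minimum is exactly the element whose indices lie as far to the right as membership permits. Second, since deleting a variable from a $t$--spread monomial again produces a $t$--spread monomial, a $t$--spread monomial $v=x_{j_1}\cdots x_{j_{\ell_1+1}}$ belongs to $\Shad_t(B_t\{u\})$ if and only if, for some position $s$, the deletion $v/x_{j_s}$ is Borel--below $u$, i.e.\ its sorted index vector is dominated componentwise by $(i_1,\dots,i_{\ell_1})$. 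Combined with the bound $\max(v)\le k_2+t\ell_1+1$ coming from the definition of $\BShad$, membership is then a purely numerical condition on the $j_\ell$'s.

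First I would prove, by induction on $\ell$, that once $j_1,\dots,j_{\ell-1}$ have been picked as large as possible, the largest admissible value of $j_\ell$ is $\min\!\big(i_\ell,\ k_2+t(\ell-1)+1\big)$: the bound $j_\ell\le k_2+t(\ell-1)+1$ comes from $t$--spreadness together with $\max(v)\le k_2+t\ell_1+1$, and the bound $j_\ell\le i_\ell$ holds because, while $j_1,\dots,j_{\ell-1}$ still agree with $i_1,\dots,i_{\ell-1}$, every deletion $v/x_{j_s}$ that has a chance of being Borel--below $u$ has $s\ge\ell$ (deleting an earlier variable pushes some coordinate of $v/x_{j_s}$ strictly above the matching $i_s$), and each such deletion puts $j_\ell$ or a larger index into position $\le\ell$. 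Both bounds are attained simultaneously. Consequently the $>_{\slex}$--minimum agrees with $u$ on an initial block $x_{i_1}\cdots x_{i_p}$ with
\[
p=\max\bigl(\{0\}\cup\{\ell\ge1:\ i_\ell\le k_2+t(\ell-1)+1\}\bigr),\qquad 0\le p\le\ell_1-1,
\]
and from position $p+1$ on it is the rightmost admissible tail $x_{k_2+tp+1}\,x_{k_2+t(p+1)+1}\cdots x_{k_2+t\ell_1+1}$. One then checks directly that this $v$ is admissible: it is $t$--spread because the junction inequality $i_p\le k_2+t(p-1)+1$ is precisely the defining inequality of $p$; it meets the $\max$--bound by construction; and it lies in $\Shad_t(B_t\{u\})$ because deleting its last variable yields a monomial Borel--below $u$ (its $(p+1)$--st coordinate equals $k_2+tp+1<i_{p+1}$ by maximality of $p$, and for later coordinates the inequality propagates since $i$ grows by at least $t$ at each step, using $i_{\ell_1}=k_1+t(\ell_1-1)+1>k_2+t(\ell_1-1)+1$, i.e.\ $k_1>k_2$).

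It remains to recognise $\ell_1-2-m$ as this $p$. This is the telescoping computation already carried out in the proof of Theorem \ref{teorcarnum1}: writing $\wdt\big((\ell_1-2-s)\gap_t\big)(u)=i_{\ell_1-1-s}-i_{\ell_1-2-s}-t$, the partial sums in the statement collapse, and the running total in the defining set for $m$ first reaches its threshold exactly at the value $j$ for which $\ell_1-2-j$ equals $p$. Substituting $p=\ell_1-2-m$ into ``initial block of $u$, then the maximal right tail'' yields precisely $x_{i_1}\cdots x_{i_{\ell_1-2-m}}\big(\prod_{\nu=\ell_1-m-1}^{\ell_1+1}x_{k_2+t(\nu-1)+1}\big)$.

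The part I expect to be most delicate is the forcing argument in the greedy step: one must show rigorously both that no admissible monomial can beat the greedy choice at any position \emph{and} that the greedy choices assemble into a single admissible monomial rather than merely being locally consistent. The boundary values also deserve attention — $m=-1$ (so $p=\ell_1-1$, only the last variable of $u$ replaced and one new variable appended) and $m=\ell_1-2$ (so $p=0$, empty initial block, $v$ the $>_{\slex}$--least $t$--spread monomial of degree $\ell_1+1$ with $\max\le k_2+t\ell_1+1$) — and must be matched against the conventions $\max(1)=0$ and ``a gap has width $0$ when there is none''.
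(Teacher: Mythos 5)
Your greedy, position-by-position derivation is sound and is, in substance, the same ``keep a prefix of $u$, then append the rightmost admissible tail'' idea as the paper's proof: the paper starts from the candidate $x_{i_1}\cdots x_{i_{\ell_1-1}}x_{k_2+t(\ell_1-1)+1}x_{k_2+t\ell_1+1}$ and, when the junction at position $\ell_1-1$ fails $t$--spreadness, retracts the prefix one step at a time, accumulating the $\wdt$'s; you grow the prefix forward and characterize where it stops. Your detour through coordinate-wise Borel domination and ``delete a variable'' makes the combinatorics more transparent than the paper's very terse write-up, and the observation that $\{\ell : i_\ell\le k_2+t(\ell-1)+1\}$ is an initial interval (because $i_{\ell+1}-i_\ell\ge t$) is exactly what makes a greedy valid.

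The gap is the very last step, which you assert without carrying it out: ``the running total first reaches its threshold exactly at the $j$ with $\ell_1-2-j=p$.'' If you actually telescope, $\sum_{s=0}^{j}\wdt\bigl((\ell_1-2-s)\gap_t\bigr)(u)=i_{\ell_1-1}-i_{\ell_1-2-j}-(j+1)t$, so the defining inequality for $m$ reduces, after substituting $\ell=\ell_1-2-j$, to $i_\ell\le k_2+t\ell+1$. That is \emph{not} your condition $i_\ell\le k_2+t(\ell-1)+1$; the two thresholds differ by exactly $t$, and this is not a cosmetic discrepancy. Concretely, take $t=2$, $u=x_2x_5x_8$ (so $k_1=3$, $\ell_1=3$), $k_2=1$, $\ell_2=4$: your greedy gives $p=1$ and the correct minimum $x_2x_4x_6x_8$, whereas the stated $m$-formula yields $m=-1$, hence $p=2$ and the monomial $x_2x_5x_6x_8$, which is not even $2$--spread. (The paper's own Case~1 threshold, $k_2+t(\ell_1-1)+1-i_{\ell_1-1}\ge t$, agrees with your $p$-condition; the $m$-formula as printed, with $k_2+t(\ell_2-1)+1$, does not, and the range $m\in\{-1,\dots,\ell_1-3\}$ also fails to reach your boundary case $p=0$.) So the step you call ``telescoping, already done in Theorem~\ref{teorcarnum1}'' is precisely the place you must work out in full: you need to reconcile your junction inequality $i_p\le k_2+t(p-1)+1$ with the constant appearing in the definition of $m$, since as written they do not match. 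Your other flagged worries -- rigor of the forcing argument, and the conventions at $m=-1$ and $p=0$ -- are legitimate but secondary to this.
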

\begin{proof}
Firstly, we have that $i_{\ell_1}=k_1+t(\ell_1-1)+1$. Moreover, since $k_1>k_2$, we may consider the monomial
\begin{equation}\label{eq1:min}
v=x_{i_1}x_{i_2}\cdots x_{i_{(\ell_1-1)}}x_{k_2+t(\ell_1-1)+1}x_{k_2+t((\ell_1+1)-1)+1}.
\end{equation}
If $k_2+t(\ell_1-1)+1-i_{(\ell_1-1)}\ge t$, then $v$ is the minimum of $\BShad_t(u)_{(k_2,\ell_1+1)}$. 
		
Otherwise, if $k_2+t(\ell_1-1)+1-i_{(\ell_1-1)}<t$, we proceed as follows.
		
Let us define the following nonnegative integer
\small
$$
\overline{m}=\min\Big\{j\in\ZZ_{\geq 0}:\big(k_2+t\big((\ell_1+1)-1\big)+1-i_{(\ell_1-1)}\big)+\sum_{s=0}^j\wdt\big((\ell_1-2-s)\gap_t\big)(u)\ge t\Big\}.
$$
\normalsize		
		
Clearly, $\overline{m}\in\{0,\dots,\ell_1-3\}$. Hence, the minimum of $\BShad_t(u)_{(k_2,\ell_1+1)}$ is the $t$--spread monomial,
\begin{equation}\label{eq2:min}
 x_{i_1}\cdots x_{i_{(\ell_1-2-\overline{m})}}\bigg(\prod_{\nu=\ell_1-\overline{m}-1}^{\ell_1+1}x_{k_2+t(\nu-1)+1}\bigg).
\end{equation}


More in details, in order to obtain $\min(\BShad_t(u)_{(k_2,\ell_1+1)})$, we decrease the minimum admissible number of variables to the right of $x_{k_2+t(\ell_1-1)+1}$ until we obtain a $t$--spread monomial. 

Finally, setting \small
$$m=\min\Big\{j\in\ZZ_{\geq -1}:(k_2+t(\ell_2-1)+1-i_{(\ell_1-1)})+\sum_{s=0}^j\wdt\big((\ell_1-2-s)\gap_t\big)(u)\ge t\Big\},$$
\normalsize
the assertion follows. Note that if we set $\overline{m}=-1$ in (\ref{eq2:min}), then we obtain the monomial $v$ in (\ref{eq1:min}). This is the case when $k_2+t(\ell_1-1)+1-i_{(\ell_1-1)}\ge t$.
\end{proof}
	
\begin{Rem}\rm
Observe that the integer $m$ in Lemma \ref{lemmacarnum2} can be written in a more suitable way. In fact,
setting 
\[
\widetilde m = \big(k_2+t\big((\ell_1+1)-1\big)+1-i_{(\ell_1-1)}\big)+\sum_{s=0}^j\wdt\big((\ell_1-2-s)\gap_t\big)(u),
\]
one has
\begin{align*}
\widetilde m = &k_2+t\ell_1+1-i_{(\ell_1-1)}+\sum_{s=0}^j\big[i_{(\ell_1-1-s)}-i_{(\ell_1-2-s)}-t\big]\\
=&k_2+t\ell_1+1-i_{(\ell_1-2-j)}-(j+1)t.
\end{align*}
Thus, $$m=\min\Big\{j\in\ZZ_{\geq -1}:k_2+t\ell_1+1-i_{(\ell_1-2-j)}\ge(j+2)t\Big\}.$$
\end{Rem}\medskip

Lemma \ref{lemmacarnum2} assures the correctness of the following algorithm.
\begin{Const}\label{carnumcostr}\rm
Let $n$, $t$ be two positive integers. Let $(k_1,\ell_1),\ (k_2,\ell_2)$ be two pairs of positive integers such that $k_1>k_2,\ 2\le\ell_1<\ell_2$ and $k_i+t(\ell_i-1)+1\le n$, for $i=1,2$.
		
Let $u=x_{i_1}x_{i_2}\cdots x_{i_{\ell_1}}$, $1\le i_1<i_2<\dots <i_{\ell_1}\le n$ be a monomial of $A^t(k_1,\ell_1)$. Hence, $i_{\ell_1}=k_1+t(\ell_1-1)+1$. We want to determine
$$
v=\min\BShad_t(u)_{(k_2,\ell_2)}.
$$
By Lemma \ref{lemmacarnum2},  we can get the monomial
$$
\widetilde{v}=\min\BShad_t(u)_{(k_2,\ell_1+1)}=x_{i_1}\cdots x_{i_{(\ell_1-2-m)}}\bigg(\prod_{\nu=\ell_1-m-1}^{\ell_1+1}x_{k_2+t(\nu-1)+1}\bigg),
$$
with $m=\min\Big\{j\in\ZZ_{\geq -1}:k_2+t\ell_1+1-i_{(\ell_1-2-m)}\ge(m+2)t\Big\}$. Hence, if $\ell_2=\ell_1+1$, then $v=\widetilde{v}$. 
		
Otherwise, 
\begin{align*}
v&=\min\BShad_t(u)_{(k_2,\ell_2)} =\widetilde{v}\cdot x_{k_2+t[(\ell_1+2)-1]+1}\cdots x_{k_2+t(\ell_2-1)+1}\\
&=x_{i_1}\cdots x_{i_{\ell_1-2-m}}\bigg(\prod_{\nu=\ell_1-m-1}^{\ell_1+1}x_{k_2+t(\nu-1)+1}\bigg)x_{k_2+t[(\ell_1+2)-1]+1}\cdots x_{k_2+t(\ell_2-1)+1}.
\end{align*}
Finally,
$$
\min\BShad_t(u)_{(k_2,\ell_2)}=x_{i_1}\cdots x_{i_{(\ell_1-2-m)}}\bigg(\prod_{\nu=\ell_1-m-1}^{\ell_2}x_{k_2+t(\nu-1)+1}\bigg).
$$
\end{Const}\medskip
	
	
\begin{Expl}\rm
Let $S=K[x_1, \ldots, x_{19}]$, $t=4$, $(k_1,\ell_1)=(7,3)$, $(k_2,\ell_2)=(6,4)$. Let $u=x_{i_1}x_{i_2}x_{i_3}=x_6x_{10}x_{16}$ be a monomial of $A^t(k_1,\ell_1)=A^4(7,3)$. We want to determine
		
$$
\min\BShad_t(u)_{(k_2,\ell_2)}=\min\BShad_4(x_6x_{10}x_{16})_{(6,4)}.
$$
We will use Construction \ref{carnumcostr}. One can observe that
$$
k_2+t(\ell_2-1)+1-i_{\ell_1-1}=19-10=9>4=t.
$$
Hence, we have $m=-1$ and so  we do not need to consider the widths of any $j\gap_t$ of $u$. Therefore, 
\begin{align*}
\min\BShad_4(x_6x_{10}x_{16})_{(6,4)}&=x_{i_1}\cdots x_{i_{(\ell_1-2-m)}}\bigg(\prod_{\nu=\ell_1-m-1}^{\ell_2}x_{k_2+t(\nu-1)+1}\bigg)\\
&=x_{i_1}x_{i_2}\Big(\prod_{\nu=3}^{4}x_{6+4(\nu-1)+1}\Big)\\
&=x_6x_{10}x_{15}x_{19}.
\end{align*}
\end{Expl}
	
\section{An example}\label{sec:expl}
In this Section, we provide an example that illustrates Theorem \ref{carnummainteor}.
\begin{Expl}\rm
Let $n=25$, $t=3$, 
\[
\C=\big\{(k_1,\ell_1),(k_2,\ell_2),(k_3,\ell_3),(k_4,\ell_4)\big\}=\big\{(6,2),(5,4),(4,5),(3,7)\big\}\]
and 
\[a=(a_1,a_2,a_3,a_4)=(2,1,3,2).\]
	
Firstly, we \emph{decompose} $n$ as the sum $d+kt$ with $d=1$ and $k=8$, \emph{i.e.}, $n= 25 = 1+ 8\cdot 3$. 
	
On the other hand, $n-t-1=21$ and, since $\ell_1=2$, we have to consider the integer $k+\left\lfloor\frac{d-3}{t}\right\rfloor+1=8$. It follows that the required bounds
$$
n-t-1 \ge k_1>k_2>k_3>k_4\ge1,\quad
2\le\ell_1<\ell_2<\ell_3<\ell_4\le k+\left\lfloor\frac{d-3}{t}\right\rfloor+1
$$ 
are satisfied. According to Theorem \ref{carnummainteor}, a $3$--spread strongly stable ideal $I$ of $S=K[x_1,\dots,x_{25}]$ with corner sequence $\C$ and corner values sequence $a$ does exist if and only if condition (2) in the theorem is satisfied.
Hence, we need to verify the bounds $a_i\le n_i$ for all $i=1,\dots,4$.
	
First of all, let $v_4=x_{k_4+1}\cdots x_{k_4+t(\ell_4-1)+1}=x_4x_7x_{10}x_{13}x_{16}x_{19}x_{22}$ be the smallest monomial of $A^t(k_4,\ell_4)=A^3(3,7)$. Let $w_4 \in A^3(3,7)$ such that $A_4=[w_4,v_4]$ has cardinality $a_4=2$. We have $w_4=x_3x_7x_{10}x_{13}x_{16}x_{19}x_{22}$. On the other hand,
\small
\begin{eqnarray*}
n_4&=&\big|\big\{u\in A^3(3,7):u\ge v_4\big\}\big|=\big|[x_1x_{4}x_7x_9x_{11}x_{14}x_{22},x_3x_7x_{10}x_{13}x_{16}x_{19}x_{22}]\big|\\
&=&|A^3(3,7)|=\binom{3+7-1}{7-1}=\binom{9}{6}=84,
\end{eqnarray*}
\normalsize
and $2=a_4\le n_4=84$.
	
Let
\begin{align*}
v_3&=\min\big\{u\in A^t(k_3,\ell_3):\max A_4\notin\BShad_t(u)_{(k_4,\ell_4)}\big\}\\
&=\min\big\{u\in A^3(4,5):x_3x_7x_{10}x_{13}x_{16}x_{19}x_{22}\notin\BShad_3(u)_{(3,7)}\big\}\\
&=x_3x_6x_{11}x_{14}x_{17}.
\end{align*}

Let $w_3 \in A^3(4,5)$ be the monomial such that $\vert A_3\vert = \vert [w_3,v_3]\vert =a_3=3$. It is $w_3=x_3x_6x_{10}x_{13}x_{17}$. Hence,
$$
A_3=\big\{x_3x_6x_{10}x_{13}x_{17},x_3x_6x_{10}x_{14}x_{17},x_3x_6x_{11}x_{14}x_{17}\big\}.
$$ 

In order to calculate
$$
n_3=\big|\big\{u\in A^3(4,5):u\ge v_3\big\}\big| = \big|[x_1x_4x_7x_{10}x_{17},x_3x_6x_{11}x_{14}x_{17}]\big|,
$$ 
we consider the highlighted binomial coefficients of the following binomial decompositions:
\begin{align*}
\tbinom{k_3+\ell_3-1}{\ell_3-1}=\tbinom{8}{4}=\boxed{\bf{\tbinom{7}{3}+\tbinom{6}{3}}} + &\tbinom{5}{3}+ \tbinom{4}{3}+\tbinom{3}{3}\\
& \tbinom{5}{3}=
\begin{aligned}[t]  \tbinom{4}{2}&+\tbinom{3}{2}+\tbinom{2}{2} \\
\tbinom{4}{2}&=\boxed{\bf{\tbinom{3}{1}+\tbinom{2}{1}}}+\tbinom{1}{1}.
\end{aligned}
\end{align*}
Hence, $n_3=\tbinom{7}{3}+\tbinom{6}{3}+\tbinom{3}{1}+\tbinom{2}{1}+\tbinom{0}{0}=61$ and $3=a_3\le n_3=61$.
	
Let
\begin{align*}
v_2&=\min\big\{u\in A^t(k_2,\ell_2):\max A_3\notin\BShad_t(u)_{(k_3,\ell_3)}\big\}\\
&=\min\big\{u\in A^3(5,4):x_3x_6x_{10}x_{13}x_{17}\notin\BShad_3(u)_{(4,5)}\big\}\\
&=x_3x_6x_9x_{15}.
\end{align*}
Since $a_2=1$, we have $A_2=[w_2,v_2]=\{v_2\}$, \emph{i.e.}, $w_2=v_2$. Now, we evaluate $n_2=\big|\big\{u\in A^3(5,4):u\ge v_2\big\}\big| =\big|[x_1x_4x_7x_{15},x_3x_6x_9x_{15}]\big|$. We consider the following binomial decompositions:
\begin{equation}\label{decompesmainteor}
\begin{aligned}
\tbinom{k_2+\ell_2-1}{\ell_2-1}=\tbinom{8}{3}=\boxed{\bf{\tbinom{7}{2}+\tbinom{6}{2}}} + &\tbinom{5}{2}+ \tbinom{4}{2}+\tbinom{3}{2}+\tbinom{2}{2}\\
& \tbinom{5}{2}=
\begin{aligned}[t]  \tbinom{4}{1}&+\tbinom{3}{1}+\tbinom{2}{1}+\tbinom{1}{1} \\
\tbinom{4}{1}&=\tbinom{3}{0}+\tbinom{2}{0}+\tbinom{1}{0}+\tbinom{0}{0}.
\end{aligned}
\end{aligned}
\end{equation}
Hence, $n_2=\tbinom{7}{2}+\tbinom{6}{2}+\tbinom{0}{0}=37$ and $1=a_2\le n_2=37$.
	
Finally, $v_1=\min\big\{u\in A^t(k_1,\ell_1):\max A_2\notin\BShad_t(u)_{(k_2,\ell_2)}\big\}=x_2x_{10}$. Since $a_1=2$, we have $A_2=[x_1x_{10},x_2x_{10}]=\{x_1x_{10},x_2x_{10}\}$, and $a_1=2=n_1$.\medskip

Now, we construct a $3$--spread strongly stable ideal $I$ with corner sequence $\Corn(I) = \C$ and $a(I) = a$ by refining the bounds for the $a_i$'s (see Theorem \ref{carnummainteor}, (iii)).
\begin{enumerate}
\item[-] $a_1=2$, since $p_1=0$, $a_1=n_1-p_1=n_1=2$, we set 
$$G(I)_{\ell_1}=G(I)_2=B_3\{x_1x_{10},x_2x_{10}\}.$$
Note that $x_1x_{10}, x_{2}x_{10}$ are the greatest monomials of $A^t(k_1,\ell_1)=A^3(6,2)$, with respect to $>_{\slex}$.
\item[-] Let us consider the corner $(k_2,\ell_2)=(5,4)$. Since $\min\BShad_3([I_2]_3)_{(5,4)}$ $=$ $x_2x_9x_{12}x_{15}$, then  $\max(A^t(k_2,\ell_2)\setminus\BShad_t([I_{\ell_1}]_2)_{(5,4)})$ is the largest monomial of $A^t(k_2,\ell_2)$ following $x_2x_9x_{12}x_{15}$ respect to $>_{\slex}$, \emph{i.e.}, $u_{2,1}$ $=$ $x_3x_6x_9x_{15}$. In order to calculate $p_2=|\{v\in A^3(5,4):v>u_{2,1}\}|$, we need to consider the same binomial decompositions of (\ref{decompesmainteor}). Thus, $p_2=\binom{7}{2}+\binom{6}{2}=36$, $a_2=1\le n_2-p_2=37-36=1$. Hence, we set
$$
G(I)_{\ell_2}=G(I)_4=B_3\{ x_3x_6x_9x_{15}\}\setminus\Shad_3^2([I_2]_3).
$$
\item[-] Let us consider the corner $(k_3,\ell_3)=(4,5)$. Since $\min\BShad_3([I_4]_3)_{(4,5)}$ $=$ $x_3x_6x_9x_{14}x_{17}$, then $\max(A^t(k_3,\ell_3)\setminus\BShad_3([I_4]_3)_{(4,5)})$ is the largest monomial of $A^t(k_3,\ell_3)$ that follows $x_3x_6x_9x_{14}x_{17}$ respect to  $>_{\slex}$. It is $u_{3,1}=x_3x_6x_{10}x_{13}x_{17}$. In order to evaluate $p_3=|\{v\in A^3(4,5):v>u_{3,1}\}|$, we consider the following suitable binomial decompositions:
\begin{align*}
\tbinom{k_3+\ell_3-1}{\ell_3-1}=\tbinom{8}{4}=\boxed{\bf{\tbinom{7}{3}+\tbinom{6}{3}}} + &\tbinom{5}{3}+ \tbinom{4}{3}+\tbinom{3}{3}\\
& \tbinom{5}{3}=
\begin{aligned}[t]  \tbinom{4}{2}&+\tbinom{3}{2}+\tbinom{2}{2} \\
\tbinom{4}{2}&=\boxed{\bf{\tbinom{3}{1}}}+\tbinom{2}{1}+\tbinom{1}{1}.
\end{aligned}
\end{align*}
Therefore, $p_3=\tbinom{7}{3}+\tbinom{6}{3}+\tbinom{3}{1}=58$ and $3=a_3\le n_3-p_3=61-58=3$, and we can construct $G(I)_{\ell_3}=G(I)_5$. We set
$$
G(I)_5=B_3\{ x_3x_6x_{10}x_{13}x_{17},x_3x_6x_{10}x_{14}x_{17},x_3x_6x_{11}x_{14}x_{17}\}\setminus\Shad_3^1([I_4]_3).
$$
\item[-] Finally, we consider the corner $(k_4,\ell_4)=(3,7)$. The smallest monomial of $\BShad_3([I_5]_3)_{(3,7)}$ is $x_3x_6x_{10}x_{13}x_{16}x_{19}x_{22}$, then the largest monomial of $A^t(k_4,\ell_4)\setminus\BShad_3([I_5]_3)_{(3,7)}$ is $u_{4,1}=x_3x_7x_{10}x_{13}x_{16}x_{19}x_{22}$. In order to compute $p_4=|\{v\in A^3(3,7):v>u_{4,1}\}|$, we consider the following binomial decompositions:
\begin{align*}
\tbinom{9}{6}=\boxed{\bf{\tbinom{8}{5}+\tbinom{7}{5}}}+&\tbinom{6}{5}+\tbinom{5}{5}\\
& \tbinom{6}{5}= \begin{aligned}[t]
\boxed{\bf{\tbinom{5}{4}}}+\tbinom{4}{4}.
\end{aligned}
\end{align*}
Hence $p_4=\binom{8}{5}+\binom{7}{5}+\binom{5}{4}=82$, and $2=a_4\le n_4-p_4=84-82=2$. Thus, we can construct $G(I)_{\ell_4}=G(I)_7$. We set
$$
G(I)_7=B_3\{ x_3x_7x_{10}x_{13}x_{16}x_{19}x_{22},x_4x_7x_{10}x_{13}x_{16}x_{19}x_{22}\}\setminus\Shad_3^2([I_5]_3).
$$
\end{enumerate}
Finally, we have constructed a $3$--spread strongly stable ideal
\begin{align*}
I=&B_3\big(x_1x_{10},\bm{x_2x_{10}},\bm{x_3x_6x_9x_{15}},x_3x_6x_{10}x_{13}x_{17},x_3x_6x_{10}x_{14}x_{17},\bm{x_3x_6x_{11}x_{14}x_{17}},\\&x_3x_7x_{10}x_{13}x_{16}x_{19}x_{22},\bm{x_4x_7x_{10}x_{13}x_{16}x_{19}x_{22}}\big)\\
=&\big(x_1x_4,x_1x_5,x_1x_6,x_1x_7,x_1x_8,x_1x_9,x_1x_{10},x_2x_5,x_2x_6,x_2x_7,x_2x_8,x_2x_9,\bm{x_2x_{10}},\\
&x_3x_6x_9x_{12},x_3x_6x_9x_{13},x_3x_6x_9x_{14},\bm{x_3x_6x_9x_{15}},\\
&x_3x_6x_{10}x_{13}x_{16},x_3x_6x_{10}x_{13}x_{17},x_3x_6x_{10}x_{14}x_{17},\bm{x_3x_6x_{11}x_{14}x_{17}},\\
&x_3x_7x_{10}x_{13}x_{16}x_{19}x_{22},\bm{x_4x_7x_{10}x_{13}x_{16}x_{19}x_{22}}\big).
\end{align*}
with corner sequence $\C$ and corner values sequence $a$. The highligthed monomial are the $3$--spread Borel generators of $I$. The Betti table of $I$ is
\begin{center}
\begin{tabular}{ccccccccc}
&&0&1&2&3&4&5&6\\ 
Tot&:&23&77&117&100&51&15&2\\ \hline
2&:&13&42&70&70&42&14&2\\
3&:&-&-&-&-&-&-&-\\
4&:&4&14&20&15&6&1&-\\
5&:&4&15&21&13&3&-&-\\
6&:&-&-&-&-&-&-&-\\
7&:&2&6&6&2&-&-&-
\end{tabular}
\end{center}
\end{Expl}

\begin{Rem}\em
All the algorithmic and constructive methods analyzed in this paper have been implemented into a \emph{Macaulay2} packages ``TSpreadIdeals.m2'', by the authors themselves, and tested with \emph{Macaulay2} $1.17.2.1$. To the best of our knowledge, specific packages for manage $t$-spread ideals are not yet available in the most well--known symbolic algebra systems (\emph{e.g.}, \emph{Macaulay2}, CoCoA or Singular). Hence, we believe that such a package could be useful since the topics covered have been introduced recently and represent a new line of research (\cite{AC2, AEL,CAC, EHQ, RD}).

All the examples in this paper have been checked or constructed by this package.
\end{Rem}


\begin{thebibliography}{99}
	
		
\bibitem{ABR} J. Abbott, A. M. Bigatti, L. Robbiano, \textit{CoCoA: a system for doing computations in Commutative Algebra}. Available at \url{http://cocoa.dima.unige.it}.
		
\bibitem{AC} L.~Amata, M.~Crupi. Computation of graded ideals with given extremal Betti numbers in a polynomial ring. J. Symbolic Computation {\bf 93} (2019), 120--132.
		
\bibitem{AC1} L.~Amata, M.~Crupi. On the extremal Betti numbers of squarefree monomial ideals. Int. Electron. J. Algebra {\bf 30} (2021), 168-202.
		
\bibitem{AC2} L.~Amata, M.~Crupi. Extremal Betti Numbers of t-Spread Strongly Stable Ideals. Mathematics {\bf 7} (2019), 695.

\bibitem{AFC} L.~Amata, A.~Ficarra, M.~Crupi. Upper bounds for Extremal Betti Numbers of $t$-Spread Strongly Stable Ideals. {B}ull. {M}ath. {S}oc. {S}ci. {M}ath. {R}oumanie ({N.S.}), to appear 2021.

\bibitem{AEL} C.~Andrei, V.~Ene, B.~Lajmiri. Powers of t-spread principal Borel ideals. Archiv der Mathematik {\bf 112} (2019), 587--597.
		
\bibitem{CAC} C.~Andrei-Ciobanu, Kruskal--Katona Theorem for $t$--spread strongly stable ideals. {B}ull. {M}ath. {S}oc. {S}ci. {M}ath. {R}oumanie ({N.S.}) {\bf 62(110)(2)} (2019), 107--122.
		
\bibitem{AHH2} A.~Aramova, J.~Herzog, T.~Hibi. Squarefree lexsegment ideals.  Math.Z. {\bf 228} (1998), 353--378.
		
		
\bibitem{BCP} D.~Bayer, H.~Charalambous, S.~Popescu. Extremal Betti numbers and Applications to Monomial Ideals. J. Algebra {\bf 221} (1999), 497--512.
		
		
\bibitem{MC} M.~Crupi. Extremal Betti numbers of graded modules, J. Pure Appl. Algebra {\bf 220}, (2016), 2277--2288.
		
		
\bibitem{MC3} M.~Crupi. Computing general strongly stable modules with given extremal Betti numbers.  J. Com. Alg. {\bf 12(1)} (2020), 53--70	
		
		
\bibitem{CF} M.~Crupi, C.~Ferr\`{o}. Squarefree monomial modules and extremal Betti numbers, Algebra Colloq. {\bf 23(3)} (2016), 519-530.
	
		
		
		
		
		

\bibitem{RD} R.~Dinu. Gorenstein $T$-spread Veronese algebras. \emph{Osaka J. Math.} {\bf 57}(4) (2020), 935--947.

		
\bibitem{EK} S.~Eliahou, M.~Kervaire. Minimal resolutions of some monomial ideals, J. Algebra {\bf 129} (1990), 1--25.
		
\bibitem{Ei} D.~Eisenbud, {\em Commutative Algebra with a view toward  Algebraic Geometry}, Springer-Verlag, 1995.
		
\bibitem{EHQ} V.~Ene, J.~Herzog, A.~A.~Qureshi. t-spread strongly stable monomial ideals*, Communications in Algebra {\bf 47(12)} (2019), 5303--5316.
		
\bibitem{GDS} D.~R.~Grayson, M.~E.~Stillman, {\em Macaulay2, a software system for research in algebraic geometry}. Available at \url{http://www.math.uiuc.edu/Macaulay2}.
		
\bibitem{JT} J.~Herzog, T.~Hibi, \emph{Monomial ideals}, Graduate texts in Mathematics {\bf 260}, Springer--Verlag, 2011.
		
\bibitem{HSV} J.~Herzog, L.~Sharifan, M.~Varbaro. The possible extremal Betti numbers of a homogeneous ideal. Proceedings of the AMS {\bf 142} (2014), 1875--1891.
		
\bibitem{RPS}	R. ~P. ~Stanley. The upper bound conjecture and Cohen--Macaulay rings, Studies in Appl. Math. {\bf 54(2)} (1975),  135--142.
		
\bibitem{MS}  E.~Miller, B.~Sturmfels. {\em Combinatorial Commutative Algebra}, Graduate Texts in Mathematics, Springer-Verlag {\bf 227}, 2005.
	
\end{thebibliography}
\end{document}